\newcommand{\ind}{{\alpha, \beta, \gamma}}
\newtheorem{theorem}{Theorem}[section]
\theoremstyle{plain}
\newtheorem{definition}[theorem]{Definition}
\newtheorem{lemma}[theorem]{Lemma}
\newtheorem{notation}{Notation}
\newtheorem{proposition}[theorem]{Proposition}
\newtheorem{remark}[theorem]{Remark}
\definecolor{trp}{rgb}{1,1,1}
\definecolor{red}{rgb}{1,0,.2}
\definecolor{blue}{rgb}{0,0,1}
\definecolor{rgrey}{rgb}{.8,0.4,.4}  
\definecolor{grey}{rgb}{.13,.13,.13}  
\definecolor{green}{rgb}{0.0,0.4,0.2}
\newcommand*{\vect}[1]{\ensuremath{\underline{#1}}}
\newcommand{\pv}{\underline{p}}
\newcommand{\norm}[1]{\left\lVert #1 \right\rVert}
\newcommand{\der}{\mathrm{d}}
\newcommand{\abs}[1]{\left| #1 \right|}
\newcommand{\rest}[2]{\left.  {#1} \right|_{#2}}
\numberwithin{equation}{section}
\newcommand{\R}{\mathbb{R}}
\newcommand{\N}{\mathbb{N}}
\newcommand{\tv}{\mathbf{t}}
\newcommand{\ii}{\mathbf{i}}
\newcommand{\jj}{\mathbf{j}}
\newcommand{\hdim}{\mathrm{dim}_H}
\newcommand{\leb}{\mathcal{L}}
\newcommand{\kar}[2]{\mathbbm{1}_{ {#1}  }\left(  {#2} \right)}
\DeclareMathOperator*{\esssup}{ess\,sup}
\DeclareMathOperator*{\essinf}{ess\,inf}
\begin{document}

\pagestyle{myheadings}

\title[Dimension of self-similar measures with overlaps]{On the dimension of self-similar measures with complicated overlaps}


\author{Bal\'azs B\'ar\'any}
\address[Bal\'azs B\'ar\'any]{Budapest University of Technology and Economics, Department of Stochastics, MTA-BME Stochastics Research Group, P.O.Box 91, 1521 Budapest, Hungary}
\email{balubsheep@gmail.com}

\author{Edina Szv\'ak}
\address[Edina Szv\'ak]{Budapest University of Technology and Economics, Department of Stochastics, P.O.Box 91, 1521 Budapest, Hungary}
\email{szvak.edina@gmail.com}

\subjclass[2010]{Primary 28A80 Secondary 28A78}
\keywords{Self-similar measures, Hausdorff dimension, transversality method.}
\thanks{B\'ar\'any acknowledges support from OTKA K123782, NKFI PD123970, the J\'anos Bolyai Research Scholarship of the Hungarian Academy of Sciences and the New National Excellence Program of the Ministry of Human Capacities \'UNKP-18-4-BME-385. The authors would like to thank to De-Jun Feng for the useful comments and pointing out Remark~\ref{remark}.}

\begin{abstract}
In this paper, we investigate the Hausdorff dimension of the invariant measures of the iterated function system (IFS) $\{\alpha x,\beta x,\gamma x+(1-\gamma)\}$. We provide an "almost every" type result by a direct application of the results of Feng and Hu \cite{feng2009dimension} and Kamalutdinov and Tetenov \cite{Tetenov}.
\end{abstract}

\date{\today}

\maketitle
\section{Introduction and Statement}\label{sec:intro}
Let $\mathcal{S}=\{S_1,\ldots,S_m\}$ be a family of contracing similarities on the real line, that is, $S_i(x)=r_ix+t_i$ for some  $-1 < r_i < 1$ and $t_i\in\R$. It is well known that there exists a unique, nonempty compact set $\Lambda\subset\R$ such that
$$
\Lambda=\bigcup_{i=1}^m S_i(\Lambda),
$$
see Hutchinson~\cite{Hutchinson}. We call the set $\Lambda$ {\it self-similar set} or the {\it attractor} of the iterated function system (IFS) $\mathcal{S}$. Hutchinson \cite{Hutchinson} also showed that for every probability vector $\pv=(p_1,\ldots,p_m)$ there exists a unique, compactly supported Borel probability measure $\nu$ such that
$$
\nu=\sum_{i=1}^m p_i(S_i)_*\nu.
$$
We call the measure $\nu$ {\it self-similar measure} or {\it invariant measure} with respect to the IFS $\mathcal{S}$ and probability vector $\pv$.

One of the most important topics in the field of fractal geometry is the dimension theory of self-similar sets and measures. Let us denote the Hausdorff dimension of Borel subset $A$ of $\R$ by $\dim_HA$. Moreover, let us denote the $t$-dimensional Hausdorff measure with $\mathcal{H}^t$. For the definition and basic properties of Hausdorff dimension and measure we refer the reader to \cite{Falconer2}. One can define the (lower and upper) Hausdorff dimension of a Borel measure $\mu$ on $\R$ as
\begin{eqnarray*}
\underline{\dim}_H\mu&=&\inf\{\dim_HA:\mu(A)>0\},\\
\overline{\dim}_H\mu&=&\inf\{\dim_HA:\mu(\R\setminus A)=0\}.
\end{eqnarray*}
These quantities are related to the local dimension of the measure $\mu$. Precisely, let
$$
\underline{d}_\mu(x)=\liminf_{r\to0}\frac{\log\mu(B(x,r))}{\log r}\text{ and }\overline{d}_\mu(x)=\limsup_{r\to0}\frac{\log\mu(B(x,r))}{\log r}
$$
be respectively the lower and upper {\it local dimension} of the measure $\mu$ at the point $x$, where $B(x,r)$ denotes the ball with radius $r$ and centered at $x$. Then
$$
\underline{\dim}_H\mu=\essinf_{x\sim\mu}\underline{d}_\mu(x)\text{ and }\overline{\dim}_H\mu=\esssup_{x\sim\mu}\underline{d}_\mu(x).
$$
For proofs, see \cite{Faclonerbook}. Moreover, we say that the measure $\mu$ is {\it exact dimensional} if there exists a constant $c$ such that for $\mu$-almost every $x$
$\underline{d}_\mu(x)=\overline{d}_\mu(x)=c.$ In this case, $\underline{\dim}_H\mu=\overline{\dim}_H\mu=c$, which value is denoted by $\dim_H\mu$.

It is natural to associate the points of $\Lambda$ with infinite sequences of symbols $A=\{1,\ldots,m\}$. That is, let $\Sigma=\{1,\ldots,m\}^{\N^+}$ and let $\pi\colon\Sigma\rightarrow\Lambda$ be the {\it natural projection}
$$
\pi(i_1,i_2,\ldots)=\lim_{n\to\infty}S_{i_1}\circ\cdots\circ S_{i_n}(0).
$$
Hence, $\Lambda=\pi(\Sigma)$. Moreover, if $\mu=\{p_1,\ldots,p_m\}^{\N^+}$ is the Bernoulli measure on $\Sigma$ then $\nu=\pi_*\mu=\mu\circ\pi^{-1}$ is the self-similar measure with respect to the IFS $\mathcal{S}$ and probability vector $\pv$.  We denote the left shift operator on $\Sigma$ with $\sigma$.

For a finite word $\ii=(i_1,\ldots,i_n)\in A^*$, let $S_{\ii}=S_{i_1}\circ\cdots\circ S_{i_n}$ and $r_{\ii}=\prod_{k=0}^{n}r_{i_k}$. If $\ii=(i_1, \ldots, i_n)\in A^*$, then we will use the notation $[\ii]=\{  \jj\in \Sigma : j_1=i_1, \ldots , j_n=i_n \}$. Moreover, if $\ii=(i_1, \ldots, i_n)\in A^*$ and $\jj=(j_1, j_2, \ldots )\in A^* \cup \Sigma $, then let $\ii * \jj=(i_1, i_2, \ldots, i_n, j_1, j_2, \ldots)$ be the concatenation of the words $\ii$ and $\jj$. For an $\ii=(i_1,i_2,\ldots)\in\Sigma$ and $n\geq1$, let $\ii|_n=(i_1,\ldots,i_n)$.

The Hausdorff dimension of the self-similar sets and measures is well understood if some separation holds between the functions of the IFS. Namely, we say that the IFS $\mathcal{S}=\{S_1,\ldots,S_m\}$ satisfies the {\it open set condition (OSC)} if there exists an open set $U\neq \emptyset$ such that
$$
S_i(U)\subseteq U\text{ for every $i=1,\ldots,m$ and }S_i(U)\cap S_j(U)=\emptyset\text{ for }i\neq j.
$$
Hutchinson \cite{Hutchinson} showed that under the OSC for the attractor $\Lambda$ and invariant measure $\nu$,
\[
\begin{split}
\dim_H\Lambda&=s_0,\text{ where }\sum_{i=1}^m|r_i|^{s_0}=1\text{ and }\\
\dim_H\nu&=\frac{h_\nu}{\chi_\nu},\text{ where }h_\nu=-\sum_{i=1}^m p_i\log p_i\text{ and }\chi_\nu=-\sum_{i=1}^m p_i\log|r_i|.
\end{split}
\]
The quantity $h_\nu$ is called the entropy of $\nu$ and $\chi_\nu$ is called the Lyapunov exponent of $\nu$.

The situation becomes more difficult when overlaps occur in the structure of the self-similar set. In general,
$$
\dim_H\Lambda\leq\min\{1,s_0\}\text{ and }\dim_H\nu\leq\min\{1,\frac{h_\nu}{\chi_\nu}\}.
$$

Lau and Ngai \cite{LauNgai} introduced the weak separation condition (WSC)\footnote{Although the paper was published in 1999, three years later than Zerner's result \cite{Zerner}, the preprint of the paper appeared in 1994.}, which allows exact overlaps between the cylinders but otherwise the cylinders are "well separated". That is, the identity is an isolated point of $\{S_{\ii}\circ S_{\jj}^{-1}\}_{\ii,\jj\in A^*}$. Zerner \cite{Zerner} showed that if WSC holds then $0<\mathcal{H}^{\dim_H\Lambda}(\Lambda)<\infty$. Recently, Farkas and Fraser \cite{FaFr} showed that for a self-similar set $\Lambda$ on the real line, $0<\mathcal{H}^{\dim_H\Lambda}(\Lambda)<\infty$ if and only if the corresponding IFS satisfies the WSC.

Feng and Hu \cite{feng2009dimension} showed that any self-similar measure $\nu$ is exact dimensional, regardless of separation. They also presented a formula for the dimension of the measure, which relies on the push-back of the Borel $\sigma$-algebra from $\R$ to $\Sigma$. For precise details see Section~\ref{sec:calc}. The formula is usually very hard to calculate, and according to the best knowledge of ours, it has not been used to determine the value $\dim_H\nu$ directly other than the cases of OSC or WSC. The purpose of this paper is to present an example, for which WSC does not hold, but it is possible to apply Feng and Hu's result \cite{feng2009dimension} directly.

According to our best knowledge, Pollicott and Simon \cite{PoSi} studied first a special family of parameterized self-similar IFS's, for which the WSC does not hold typically but they managed to calculate the Hausdorff dimension of the self-similar set for Lebesgue almost every parameters. In their paper, they introduced the so-called transversality condition. Later, the transversality condition had several applications in the dimension theory of iterated function systems.

We say that a parameterized family of IFS's $\mathcal{S}_\lambda=\{S_i^{(\lambda)}(x)=r_i(\lambda) x+t_i(\lambda)\}_{i=1}^m$ satisfies the {\it transversality condition} over the open set parameter space $U\subseteq\R^d$ if there exists $\delta>0$ such that for every $\ii,\jj\in\Sigma$ with $i_1\neq j_1$
\begin{equation}\label{eq:transv}
\text{if }|\pi_\lambda(\ii)-\pi_{\lambda}(\jj)|<\delta\text{ for $\lambda\in U$ then }\|\mathrm{grad}_{\lambda}(\pi_\lambda(\ii)-\pi_{\lambda}(\jj))\|>\delta.
\end{equation}

Simon and Solomyak \cite{SiSo} showed that for Lebesgue almost every $\tv=(t_1,\ldots,t_m)\in\R^m$
$$
\dim_H\Lambda_{\tv}=\min\{1,s_0\},
$$
where $\Lambda_{\tv}$ is the attractor of $\mathcal{S}_{\tv}=\{S_i(x)=r_ix+t_i\}_{i=1}^m$. Similarly, Simon, Solomyak and Urba\'nski \cite{SSU2} showed that if $\max_{i\neq j}|r_i|+|r_j|<1$ then
$$
\dim_H\nu_{\tv}=\min\{1,\frac{\sum_{i=1}^m p_i\log p_i}{\sum_{i=1}^m p_i\log|r_i|}\}\text{ for Lebesgue almost every }\tv=(t_1,\ldots,t_m)\in\R^m,
$$
where $\nu_{\tv}$ is the invariant measure w.r.t. $\mathcal{S}_\tv$ and the probability vector $\pv=(p_1,\ldots,p_m)$.

The folklore conjecture of fractal geometry claims that dimension drop ($\dim_H\Lambda<\min\{1,s_0\}$) can occur only via exact overlaps, see Hochman \cite{hochman2012self}, but it has not been verified so far. Hochman \cite[Theorem~1.1]{hochman2012self} gave a strong sufficient condition in case of self-similar systems, using methods from additive combinatorics, which ensures that there is no dimension drop. This condition is often referred as exponential separation or Diophantine condition. Precisely, we say that the {\it exponential separation condition} holds if
$$
\limsup_{n\to\infty}\frac{1}{n}\log\min_{\substack{\ii\neq\jj\in A^n\\r_{\ii}=r_{\jj}}}|S_{\ii}(0)-S_{\jj}(0)|>-\infty.
$$
Hochman \cite[Theorem~1.8]{hochman2012self} also showed the atypicality of the dimension drop in a stronger sense, namely, for any real analytic and non-degenerate parameterisations $\lambda\mapsto r_i(\lambda)$ and $\lambda\mapsto t_i(\lambda)$ for $\lambda\in U\subseteq\R$,
$$
\dim_H\{\lambda:\dim_H\Lambda_\lambda<\min\{1,s_0(\lambda)\}\}=\dim_H\{\lambda:\dim_H(\pi_\lambda)_*\mu<\min\{1,\frac{h_\nu}{\chi_\nu}\}\}=0.
$$

In this paper, we have a special interest in the following family of self-similar IFS's.
\begin{equation}\label{def:system}
\mathcal{S}_{\alpha,\beta,\gamma}=\{S_1(x)=\alpha x, S_2(x)=\beta x, S_3(x)=\gamma x+1-\gamma\},
\end{equation}
where $0<\alpha,\beta,\gamma<1$ and $\max\{\alpha,\beta\}+\gamma<1$. The system $\mathcal{S}_\ind $ has a very special structure, namely $S_1\circ S_2\equiv S_2\circ S_1$. Thus, $\mathcal{S}_\ind $ does not satisfy the transversality, nor the exponential separation condition. Also it is easy to see if $\log\alpha/\log\beta\notin\mathbb{Q}$ and $\alpha,\beta,\gamma<1/3$ then $\mathcal{S}_\ind $ does not satisfy the WSC, and thus, $\mathcal{H}^{\dim_H \Lambda_\ind }(\Lambda_\ind )=0$, see for example Fraser \cite[Section~3.1]{Fraser}.

The first author \cite{Barany} already considered the Hausdorff dimension of the attractor for every fixed value of $0<\beta,\gamma<1$ with $0<\beta+\gamma<1$ and Lebesgue almost every $\alpha\in(0,\beta)$. The proof was based on sufficiently large subsystems, which satisfy the transversality condition. Using the result of Hochman \cite{hochman2012self}, one can provide a better estimate on the exceptional set.

\begin{proposition}
	Let $\mathcal{S}_{\alpha,\beta,\gamma}$ be the IFS defined in \eqref{def:system} and let $\Lambda_{\alpha,\beta,\gamma}$ be the attractor of $\mathcal{S}_\ind $. Then for every $0<\beta,\gamma<1$  with $\beta+\gamma<1$ there exists a set $E\subset(0,\beta)$ such that $\dim_HE=0$ and for every $\alpha\in(0,\beta)\setminus E$,
	$$
	\dim_H\Lambda_{\alpha,\beta,\gamma}=\min\{1,s_1\},\text{ where $s_1$ is the unique solution of }\alpha^{s_1}+\beta^{s_1}+\gamma^{s_1}-\alpha^{s_1}\beta^{s_1}=1.
	$$
\end{proposition}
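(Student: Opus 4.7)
The plan is to pass to the reduced IFS obtained by factoring out the relation $S_1\circ S_2\equiv S_2\circ S_1$, apply Hochman's Theorem~1.8 to finite truncations of it, and handle the upper bound by an elementary cover. First I would introduce the countable family of contracting similarities $\mathcal{F}=\{F_{a,b}\}_{a,b\geq 0}$ with $F_{a,b}:=S_1^a\circ S_2^b\circ S_3$, so that $F_{a,b}(x)=\alpha^a\beta^b\gamma x+\alpha^a\beta^b(1-\gamma)$. Every $x\in\Lambda_{\alpha,\beta,\gamma}$ whose $\pi$-preimage in $\{1,2,3\}^{\N^+}$ contains infinitely many $3$'s is coded uniquely by a sequence in $(\N\times\N)^{\N^+}$ for $\mathcal{F}$ (using the commutation to push all $1$'s before the $2$'s inside each maximal $\{1,2\}$-block); the remaining points, coded by sequences with only finitely many $3$'s, form a countable set. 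A direct computation shows that the similarity-dimension equation $\sum_{a,b\geq 0}(\alpha^a\beta^b\gamma)^s=1$ factorises as $\gamma^s=(1-\alpha^s)(1-\beta^s)$, which is exactly the defining equation of $s_1$.

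The upper bound $\dim_H\Lambda_{\alpha,\beta,\gamma}\leq\min\{1,s_1\}$ then follows from the natural $n$-fold cover by images of $[0,1]$ under length-$n$ words in $\mathcal{F}$: the $s_1$-th powers of the diameters sum to a constant independent of $n$, so $\mathcal{H}^{s_1}(\Lambda_{\alpha,\beta,\gamma})<\infty$, combined with the trivial bound $\dim_H\leq 1$. For the matching lower bound, truncate to $\mathcal{F}_N=\{F_{a,b}:0\leq a,b\leq N\}$ and denote its similarity dimension by $s^{(N)}$, which solves $\sum_{0\leq a,b\leq N}(\alpha^a\beta^b\gamma)^{s^{(N)}}=1$; one checks $s^{(N)}\nearrow s_1$ monotonically. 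Since $\mathcal{F}_N$ is a finite collection of similarities whose contraction ratios and translations depend real-analytically on $\alpha\in(0,\beta)$, Hochman's Theorem~1.8 (cited above) produces an exceptional set $E_N\subset(0,\beta)$ with $\dim_H E_N=0$ such that the attractor of $\mathcal{F}_N$, which lies inside $\Lambda_{\alpha,\beta,\gamma}$, has dimension $\min\{1,s^{(N)}\}$ whenever $\alpha\notin E_N$. Setting $E=\bigcup_{N\geq 1}E_N$ preserves $\dim_H E=0$, and for $\alpha\notin E$ one obtains $\dim_H\Lambda_{\alpha,\beta,\gamma}\geq\sup_N\min\{1,s^{(N)}\}=\min\{1,s_1\}$.

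The main obstacle is verifying the non-degeneracy hypothesis of Hochman's theorem for each $\mathcal{F}_N$: for any two distinct $\ii,\jj\in\{0,\ldots,N\}^{2\N^+}$ the analytic function
\[
\alpha\longmapsto\tilde{\pi}_\alpha(\ii)-\tilde{\pi}_\alpha(\jj)=(1-\gamma)\sum_{n\geq 1}\gamma^{n-1}\bigl(\alpha^{A_n(\ii)}\beta^{B_n(\ii)}-\alpha^{A_n(\jj)}\beta^{B_n(\jj)}\bigr),
\]
with $A_n=\sum_{k\leq n}a_k$ and $B_n=\sum_{k\leq n}b_k$, must not vanish identically on $(0,\beta)$. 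My approach here is to expand as $\alpha\to 0^+$: only the terms with $A_n=0$ survive, and matching these forces the initial $\beta$-exponents of $\ii$ and $\jj$ to agree; an induction on the first index where $\ii$ and $\jj$ disagree then reduces to the same statement for the shifted sequences, eventually contradicting $\ii\neq\jj$. Morally, the only identity in the semigroup generated by $S_1,S_2,S_3$ (for generic $\alpha$) is $S_1S_2=S_2S_1$, and this has already been quotiented out in the passage from $\mathcal{S}_{\alpha,\beta,\gamma}$ to $\mathcal{F}_N$.
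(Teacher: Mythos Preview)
Your approach is essentially the same as the paper's: pass to the reduced family $\{S_1^aS_2^bS_3\}$, truncate to a finite sub-IFS, apply Hochman's Theorem~1.8 to each truncation, take the union of the zero-dimensional exceptional sets, and let the truncated similarity dimension increase to $s_1$. The paper uses the triangular truncation $\mathcal{C}_n=\{a+b\leq n\}$ while you use the square $\{0\leq a,b\leq N\}$; this is immaterial. For the upper bound the paper cites \cite[Lemma~3.5]{Barany} whereas you give the direct covering argument, which is fine.

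The one place where the arguments genuinely diverge is the verification of Hochman's non-degeneracy hypothesis. The paper does not argue this directly; it invokes \cite[Proposition~3.2]{Barany}, which establishes the transversality condition \eqref{eq:transv} for $\mathcal{S}^n_{\alpha,\beta,\gamma}$ on $(\varepsilon,\beta-\varepsilon)$, and transversality immediately implies non-degeneracy. Your proposed direct argument is on the right track but the phrase ``matching these forces the initial $\beta$-exponents of $\ii$ and $\jj$ to agree'' is not quite right as written: $\beta$ is a fixed real number, not a formal variable, so you cannot simply equate exponents. What actually makes your $\alpha\to0^+$ limit work is that $\beta+\gamma<1$, so the two-map IFS $\{S_2,S_3\}$ satisfies the strong separation condition and hence has injective natural projection; the constant term of your power series in $\alpha$ is precisely the $\{S_2,S_3\}$-projection of the word $2^{b_1}3\,2^{b_2}3\cdots 2^{b_m}3\,2^\infty$ (with $m$ the first index where $a_m>0$), and injectivity forces the initial block of $b$'s and the value of $m$ to agree. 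After that your induction by shifting goes through. So your sketch can be completed, but you should replace the exponent-matching heuristic by this separation argument.
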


For the completeness, we give the proof here.

\begin{proof}
Let $0<\beta,\gamma<1$  with $\beta+\gamma<1$ be arbitrary but fixed. Let $\mathcal{S}^n_{\alpha,\beta,\gamma}:=\{S_{\ii}\}_{\ii\in\mathcal{C}_n}$ with attractor $\Lambda^n_{\alpha,\beta,\gamma}$, where
$$
\mathcal{C}_n=\{(\overbrace{1\cdots1}^{k}\overbrace{2\cdots2}^\ell3):k+\ell\leq n\}.
$$
By \cite[Proposition~3.2]{Barany}, for every $\varepsilon>0$ the IFS $\mathcal{S}^n_{\alpha,\beta,\gamma}$ satisfies the transversality condition \eqref{eq:transv} on $(\varepsilon,\beta-\varepsilon)$ with respect to the parameter $\alpha$. Hence, by \cite[Theorem~1.8]{hochman2012self}, there exists a set $E_n^{\beta,\gamma}\subset (0,\beta)$ with $\dim_HE_n^{\beta,\gamma}=0$ such that for every $\alpha\in(0,\beta)\setminus E_n^{\beta,\gamma}$
$$
\dim_H\Lambda^n_{\alpha,\beta,\gamma}=\min\{1,\hat{s}_n\},\text{ where }\gamma^{\hat{s}_n}\sum_{k=0}^n\sum_{\ell=0}^k\alpha^{\ell\hat{s}_n}\beta^{(k-\ell)\hat{s}_n}=1.
$$
It is easy to see that $\lim_{n\to\infty}\hat{s}_n=s_1$, where $s_1$ is defined in the statement of the proposition. Thus, for every $\alpha\in(0,\beta)\setminus\bigcup_{n=0}^{\infty}E_n^{\beta,\gamma}$
$$
\dim_H\Lambda_{\alpha,\beta,\gamma}\geq\min\{1,s_1\}.
$$
The upper bound follows by \cite[Lemma~3.5, and the Proof of Theorem~1.1]{Barany}.
\end{proof}

Unfortunately, this method does not allow us to handle the Hausdorff dimension of the corresponding self-similar measures. Recently, Kamalutdinov and Tetenov \cite{Tetenov} studied the so-called two-fold Cantor sets, which has similar structure to \eqref{def:system}. Precisely, the considered system $\{\alpha x,\beta x,\alpha x+1-\alpha,\beta x+1-\beta\}$. Based on a similar method to the transversality, Kamalutdinov and Tetenov \cite{Tetenov} showed that for Lebesgue typical parameters in $(0,1/16)^2$ the cylinders satisfy certain separation properties.

By adapting the method of Kamalutdinov and Tetenov \cite{Tetenov} with a direct application of the result of Feng and Hu \cite{feng2009dimension}, we prove the following.

\begin{theorem}\label{thm:main}
	Let $\mathcal{S}_{\alpha,\beta,\gamma}$ be the IFS defined in \eqref{def:system} and let $\nu_{\alpha,\beta,\gamma}$ be the self-similar measure with probability vector $(p_1, p_2, p_3)$ and $p_i>0$ for $i=1,2,3$. Then for every $0<\beta,\gamma<1/9$ and Lebesgue almost every $\alpha\in(0,\beta)$
	\begin{align}
	\dim_H(\nu_\ind )=\frac{-(p_1\log(p_1)+p_2\log(p_2)+p_3\log(p_3))+ \Phi(p_1, p_2, p_3)}{-(p_1\log(\alpha)+p_2\log(\beta)+p_3\log(\gamma))},
	\end{align}
	where
	\begin{align*}
	\Phi(p_1, p_2, p_3)=\sum_{k=1}^\infty  \sum_{m=1}^{k}\binom{k-1}{m-1}p_3\log\left(\frac{m}{k}\right)(p_1^mp_2^{k-m}+p_1^{k-m}p_2^m).
	\end{align*}
\end{theorem}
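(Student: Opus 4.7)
The plan is to prove the theorem in three stages: (i) a separation statement controlling the only source of overlaps, (ii) an appeal to Feng--Hu's exact-dimensionality theorem, and (iii) an explicit computation of the projection entropy via a Markov chain.

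\textbf{Stage (i).} Define the equivalence $\ii\sim\jj$ on $\Sigma$ by declaring that $\ii$ and $\jj$ have the same positions of the symbol $3$ and, in every maximal $\{1,2\}$-block between consecutive $3$'s, the same number of $1$'s. Since $S_1\circ S_2=S_2\circ S_1$, $\ii\sim\jj$ always implies $\pi_\alpha(\ii)=\pi_\alpha(\jj)$. The key claim is the converse: for Lebesgue almost every $\alpha\in(0,\beta)$, $\pi_\alpha(\ii)=\pi_\alpha(\jj)$ holds only when $\ii\sim\jj$, modulo a $\mu\otimes\mu$-null set. To prove this, I would reduce each sequence to its canonical representative $\hat{\ii}$ (sort every $\{1,2\}$-block with all $1$'s before all $2$'s), verify that the reduced family $\alpha\mapsto\pi_\alpha(\hat{\ii})$ satisfies the transversality condition~\eqref{eq:transv} on $(0,\beta)$ in the sense of Kamalutdinov and Tetenov~\cite{Tetenov} (this is where the assumption $\beta,\gamma<1/9$ enters, analogously to the $1/16$ bound in \cite{Tetenov}), and then run a standard Fubini/Borel--Cantelli argument.

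\textbf{Stage (ii).} Feng and Hu~\cite{feng2009dimension} yield exact dimensionality and $\dim_H\nu_{\alpha,\beta,\gamma}=h_\pi(\mu)/\chi_\mu$, where $\chi_\mu=-(p_1\log\alpha+p_2\log\beta+p_3\log\gamma)$. By Stage~(i), the $\sigma$-algebra $\pi^{-1}\mathcal{B}_{\R}$ coincides modulo $\mu$-null sets with the $\sigma$-algebra generated by $\sim$-classes, so $h_\pi(\mu)$ equals the entropy rate of the pushforward of $\mu$ to $\Sigma/{\sim}$. Since all members of a $\sim$-class carry equal $\mu$-mass, the corresponding quotient cylinder has mass $\mu([\ii|_n])\cdot\bigl|[\ii|_n]_\sim\bigr|$, where $\bigl|[\ii|_n]_\sim\bigr|=\prod_k\binom{L_k}{N_k}$ is taken over the maximal $\{1,2\}$-blocks of $\ii|_n$, with $L_k$ the block length and $N_k$ its number of $1$'s.

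\textbf{Stage (iii).} Track $\log|[\ii|_n]_\sim|$ by the Markov chain whose state $(L',N')$ records the length and number of $1$'s in the currently open $\{1,2\}$-block. Its unique stationary distribution is $\binom{\ell}{n}p_1^np_2^{\ell-n}p_3$, and the one-step increment of $\log|[\ii|_n]_\sim|$ is $0$ if the next symbol is $3$, $\log\bigl((L'+1)/(N'+1)\bigr)$ if it is $1$, and $\log\bigl((L'+1)/(L'-N'+1)\bigr)$ if it is $2$, by the identities $\binom{L'+1}{N'+1}=\tfrac{L'+1}{N'+1}\binom{L'}{N'}$ and $\binom{L'+1}{N'}=\tfrac{L'+1}{L'-N'+1}\binom{L'}{N'}$. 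The ergodic theorem then gives $h_\pi(\mu)=-\sum_i p_i\log p_i-\mathbb{E}[\Delta]$ with
\[
\mathbb{E}[\Delta]=\sum_{\ell,n}\binom{\ell}{n}p_1^np_2^{\ell-n}p_3\left[p_1\log\tfrac{\ell+1}{n+1}+p_2\log\tfrac{\ell+1}{\ell-n+1}\right].
\]
Re-indexing the first inner sum by $k=\ell+1$, $m=n+1$, the second by $k=\ell+1$, $m=\ell-n+1$, and using $\binom{k-1}{k-m}=\binom{k-1}{m-1}$, combines the two contributions into the series $\Phi(p_1,p_2,p_3)$ of the theorem, so that $h_\pi(\mu)=-\sum_i p_i\log p_i+\Phi(p_1,p_2,p_3)$.

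The main obstacle is Stage~(i): standard transversality fails because of $S_1\circ S_2=S_2\circ S_1$, so one must pass to the reduced symbolic space and carry out the Kamalutdinov--Tetenov argument under the quantitative hypothesis $\beta,\gamma<1/9$. Once the separation is established, Stages~(ii) and~(iii) are routine, requiring only the Feng--Hu theorem and an elementary Markov-chain computation.
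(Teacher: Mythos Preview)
Your overall architecture matches the paper's: establish a separation property for Lebesgue-a.e.\ $\alpha$ via the Kamalutdinov--Tetenov General Position Theorem, invoke Feng--Hu, and compute the projection entropy explicitly. Two points, however, need tightening.

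In Stage~(i), the qualifier ``modulo a $\mu\otimes\mu$-null set'' renders your separation statement vacuous: for non-atomic $\nu$ the coincidence set $\{(\ii,\jj):\pi(\ii)=\pi(\jj)\}$ is already $\mu\otimes\mu$-null, so the implication $\pi(\ii)=\pi(\jj)\Rightarrow\ii\sim\jj$ holds trivially off a null set regardless of overlaps. What you actually need---and what the paper proves---is the deterministic geometric condition (``forward separation'') $S_1^m(S_3\Lambda)\cap S_2^n(S_3\Lambda)=\emptyset$ for all $m,n\geq1$. This is precisely what the Kamalutdinov--Tetenov argument delivers for a.e.\ $\alpha$, and it is what forces $\pi^{-1}\mathcal{B}_\R$ to coincide mod $\mu$ with the $\sigma$-algebra of $\sim$-classes.

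In Stage~(ii), the identity ``$h_\pi(\mu)=$ entropy rate of the pushforward to $\Sigma/{\sim}$'' is not a statement of Feng--Hu and requires justification; note that $\sim$ is not shift-invariant, so the quotient carries no natural dynamics and ``entropy rate'' must be interpreted purely combinatorially and then matched to $H_\mu(\mathcal{P}\mid\sigma^{-1}\pi^{-1}\gamma)-H_\mu(\mathcal{P}\mid\pi^{-1}\gamma)$. The paper sidesteps this by computing the conditional measure directly: under forward separation, if the first $\{1,2\}$-block of $\ii$ has $m$ ones and $n$ twos then $\mu_\ii([1])=m/(m+n)$, and integrating gives $H_\mu(\mathcal{P}\mid\pi^{-1}\gamma)=-\Phi$. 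Your Markov-chain calculation in Stage~(iii) is a legitimate alternative bookkeeping for that same integral and does recover~$\Phi$, but the link to $h_\pi$ should go through the Feng--Hu conditional-entropy formula rather than an unproven entropy-rate identity.
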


The structure of the paper is as follows: in Section~\ref{sec:pre} we define a certain separation condition (forward separation), and state some geometric lemmas. In Section~\ref{sec:calc}, we apply Feng and Hu's formula \cite[Theorem~2.8]{feng2009dimension} directly to calculate the dimension of the invariant measures under forward separation. Finally, using the General Positioning Theorem of Kamalutdinov and Tetenov \cite[Theorem~14]{Tetenov}, we show that almost every parameters verify the forward separation property in Section~\ref{sec:exist}.

\section{Preliminaries}\label{sec:pre}

First, let us state some basic properties of the IFS $\mathcal{S}_{\alpha,\beta,\gamma}$ and the attractor $\Lambda_{\alpha,\beta,\gamma}$. For that, we adapt the Section~1.2 of \cite{Tetenov} to our case and coin the property (i) of \cite[Proposition~2]{Tetenov} by a term {\it forward separation}. Let $L_{\alpha, \beta, \gamma}=S_1(\Lambda_{\alpha,\beta,\gamma})\cup S_2(\Lambda_{\alpha,\beta,\gamma})$ and $R_{\alpha, \beta, \gamma}=S_3(\Lambda_{\alpha,\beta,\gamma})$. It is easy to see that $\Lambda_{\alpha,\beta,\gamma}=L_{\alpha, \beta, \gamma}\cup R_{\alpha, \beta, \gamma}$.

\begin{lemma}\label{098}
	For $0<\alpha,\beta,\gamma<\frac{1}{2}$, we have
	\begin{enumerate}
		\item for all $ i \in \{  1, 2  \} $ and every $ m, n \in \mathbb{N} $ with $ m\neq n $, $ S_i^m(R_{\alpha, \beta, \gamma})\cap S_i^n(R_{\alpha, \beta, \gamma})=\emptyset$,
		\item\label{it:bou} for all $ m, n \in \mathbb{N}$, $ S_1^mS_2^n(\Lambda_{\alpha,\beta,\gamma})\subseteq S_1^m(\Lambda_{\alpha,\beta,\gamma})\cap S_2^n(\Lambda_{\alpha,\beta,\gamma})$,
		\item $ \Lambda_{\alpha,\beta,\gamma}\backslash \{  0 \} ={ \displaystyle  \bigcup_{n, m=0}^\infty } S_1^m S_2^n (R_\ind) $ .
	\end{enumerate}
\end{lemma}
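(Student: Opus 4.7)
The plan is to treat the three claims in order, exploiting three elementary facts: (a) $\Lambda_{\alpha,\beta,\gamma}\subseteq[0,1]$, so $R_{\alpha,\beta,\gamma}=S_3(\Lambda_{\alpha,\beta,\gamma})\subseteq[1-\gamma,1]$; (b) the maps $S_1$ and $S_2$ are pure scalings, hence commute; (c) $\Lambda_{\alpha,\beta,\gamma}=\bigcup_{i=1}^3 S_i(\Lambda_{\alpha,\beta,\gamma})$ and each $S_i(\Lambda_{\alpha,\beta,\gamma})\subseteq\Lambda_{\alpha,\beta,\gamma}$. The hypothesis $0<\alpha,\beta,\gamma<\tfrac12$ will be used only in part (1), to compare the lengths of the nested intervals containing $S_i^k(R_{\alpha,\beta,\gamma})$; everywhere else it is irrelevant.

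For part (1), I would observe that by (a) we have $S_i^k(R_{\alpha,\beta,\gamma})\subseteq [r_i^k(1-\gamma),\,r_i^k]$ for $i\in\{1,2\}$ and $k\in\mathbb N$, writing $r_1=\alpha$, $r_2=\beta$. Assuming without loss of generality $m>n\geq 0$, disjointness of $S_i^m(R_{\alpha,\beta,\gamma})$ and $S_i^n(R_{\alpha,\beta,\gamma})$ reduces to the strict inequality $r_i^m<r_i^n(1-\gamma)$, equivalently $r_i^{m-n}<1-\gamma$. Since $m-n\geq 1$ and $r_i<\tfrac12<1-\gamma$, this is immediate.

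For part (2), the key observation is that $S_1$ and $S_2$, being linear maps through the origin, commute: $S_1\circ S_2=S_2\circ S_1$. Combining this with the invariance $S_i(\Lambda_{\alpha,\beta,\gamma})\subseteq\Lambda_{\alpha,\beta,\gamma}$ (and hence $S_i^k(\Lambda_{\alpha,\beta,\gamma})\subseteq\Lambda_{\alpha,\beta,\gamma}$ for all $k$), I get $S_1^mS_2^n(\Lambda_{\alpha,\beta,\gamma})=S_1^m(S_2^n(\Lambda_{\alpha,\beta,\gamma}))\subseteq S_1^m(\Lambda_{\alpha,\beta,\gamma})$ and symmetrically $S_1^mS_2^n(\Lambda_{\alpha,\beta,\gamma})=S_2^n(S_1^m(\Lambda_{\alpha,\beta,\gamma}))\subseteq S_2^n(\Lambda_{\alpha,\beta,\gamma})$, which together give the claim.

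For part (3), I would pass to the symbolic side via $\pi\colon\Sigma\to\Lambda_{\alpha,\beta,\gamma}$. The inclusion $\bigcup_{m,n\geq0}S_1^mS_2^n(R_{\alpha,\beta,\gamma})\subseteq\Lambda_{\alpha,\beta,\gamma}\setminus\{0\}$ follows from $R_{\alpha,\beta,\gamma}\subseteq\Lambda_{\alpha,\beta,\gamma}$ together with $\min R_{\alpha,\beta,\gamma}\geq 1-\gamma>0$ and $\alpha^m\beta^n>0$. For the reverse inclusion, I would note first that $\pi(\ii)=0$ exactly when $\ii\in\{1,2\}^{\mathbb N^+}$ (since $S_1,S_2$ fix $0$, while any occurrence of the symbol $3$ produces a strictly positive contribution of size at least $\alpha^m\beta^n(1-\gamma)$). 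Given $x\in\Lambda_{\alpha,\beta,\gamma}\setminus\{0\}$, pick any coding $\ii$ with $\pi(\ii)=x$ and let $k$ be the first index with $i_k=3$; then $x=S_{i_1}\circ\cdots\circ S_{i_{k-1}}\circ S_3(\pi(\sigma^k\ii))$. Since $i_1,\dots,i_{k-1}\in\{1,2\}$ and these maps commute, the prefix rearranges to $S_1^mS_2^n$ with $m,n$ the counts of $1$'s and $2$'s among $i_1,\ldots,i_{k-1}$, while $S_3(\pi(\sigma^k\ii))\in R_{\alpha,\beta,\gamma}$. No step here is really an obstacle; the only place where care is needed is to justify $\pi(\ii)>0$ when some $i_k=3$, which I would do by writing $\pi(\ii)=\alpha^m\beta^n\,S_3(\pi(\sigma^k\ii))$ and bounding $S_3(\pi(\sigma^k\ii))\geq 1-\gamma>0$.
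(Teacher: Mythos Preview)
Your proof is correct and follows essentially the same approach as the paper's: bounding $R_{\alpha,\beta,\gamma}$ in an interval and comparing endpoints for part (1), invoking commutativity of $S_1,S_2$ and invariance of $\Lambda_{\alpha,\beta,\gamma}$ for part (2), and passing to the symbolic coding to split on the first occurrence of the symbol $3$ for part (3). Your argument for part (3) is in fact more carefully justified than the paper's---you spell out both inclusions and verify explicitly that $\pi(\ii)>0$ whenever a $3$ occurs---whereas the paper merely asserts the identification of $\pi^{-1}\bigl(\bigcup_{m,n}S_1^mS_2^n(R_{\alpha,\beta,\gamma})\bigr)$ with $\{\ii:\exists k,\ i_k=3\}$.
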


\begin{proof}
	\begin{enumerate}
	\item We prove only for $i=1$, the case $i=2$ is similar. Let $m, n \in \mathbb{N}\quad m > n$. Since $\gamma<1/2$, $R_{\alpha, \beta, \gamma}\subseteq (\frac{1}{2}, 1)$, and thus $S_1^m(R_{\alpha, \beta, \gamma})\subseteq (\frac{1}{2}\alpha^m, \alpha^m)$ and $S_1^n(R_{\alpha, \beta, \gamma})\subseteq (\frac{1}{2}\alpha^n, \alpha^n)$. One can see that the right endpoint of one interval is smaller than the left endpoint of the other interval, that is, $\alpha^m=\alpha \cdot \alpha^{m-1}<\frac{1}{2}\alpha^{m-1}\leq \frac{1}{2}\alpha^{n}$.
		\item Let $m, n \in \mathbb{N}$, then $S_1^m(\Lambda_\ind )\subseteq \Lambda_\ind$ and $S_2^n(\Lambda_\ind )\subseteq \Lambda_\ind $. So, we conclude that $S_2^nS_1^m(\Lambda_\ind )\subseteq S_2^n (\Lambda_\ind )$ and $S_1^m S_2^n (\Lambda_\ind )\subseteq S_1^m (\Lambda_\ind )$. Using commutativity we can get the statements.
		\item It is easy to see that
		\begin{align}
		\pi_\ind^{-1}({\displaystyle \bigcup_{m, n=0}^\infty} S_1^mS_2^n(R_\ind))=\{ \ii\in \Sigma : \text{ there exists $k\geq1$ such that } i_k=3     \}.
		\end{align}
		For those $\ii\in \Sigma$ such that there is no $k$ for which $i_k=3$, then the image of $\ii$ is 0.
	\end{enumerate}
\end{proof}

Now, we introduce a separation property, which allows us to calculate the dimension of invariant measures.

\begin{definition}\label{080}
	We call the system $\mathcal{S}_\ind$ forward separated, if $\alpha, \beta, \gamma \in (0, \frac{1}{9})$ and
	\begin{align}
	\text{for every } m, n \in \mathbb{N}\quad m, n>0 \quad S_1^m(R_\ind)\cap S_2^n(R_\ind )=\emptyset.
	\end{align}
\end{definition}

\begin{notation}
We denote the disjoint union with $\sqcup$.
\end{notation}

\begin{lemma} \label{099}
	The system $\mathcal{S}_\ind$ is forward separated if and only if
	\begin{align}\label{eq:disun}
	\Lambda_\ind \backslash\{  0\}=\bigsqcup_{n, m=0}^\infty S_1^mS_2^n (R_\ind).
	\end{align}
\end{lemma}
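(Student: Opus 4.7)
The statement is an if-and-only-if, so the plan has two directions, with the reverse direction essentially free and the forward direction carrying all of the content.

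For the easy direction, assume the right-hand side of \eqref{eq:disun} is a disjoint union. Then specializing to the distinct pairs $(m,0)$ and $(0,n)$ for arbitrary $m,n\geq 1$ yields
$S_1^m(R_\ind)\cap S_2^n(R_\ind)=\emptyset$, which is exactly the forward separation property of Definition~\ref{080}. This is the whole proof of one direction.

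For the main direction, assume $\mathcal{S}_\ind$ is forward separated. Lemma~\ref{098}(3) already gives the set-theoretic equality $\Lambda_\ind\setminus\{0\}=\bigcup_{m,n\geq 0}S_1^mS_2^n(R_\ind)$, so only pairwise disjointness remains. The plan is to fix distinct pairs $(m_1,n_1)\neq(m_2,n_2)$, assume $m_1\geq m_2$ (by swapping the two pairs if necessary), and then use the commutativity $S_1S_2=S_2S_1$ together with injectivity of $S_1,S_2$ on $\R$ to factor out the common prefix $S_1^{m_2}S_2^{\min(n_1,n_2)}$. This reduces the claim to emptiness of a \emph{residual} intersection in which at most one $S_1$-power and one $S_2$-power remain, distributed in some way between the two sides.

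A short case analysis on the signs of $m_1-m_2$ and $n_1-n_2$ produces three subcases: (i) only an $S_1$-power or only an $S_2$-power remains on one side against $R_\ind$ on the other, handled directly by Lemma~\ref{098}(1); (ii) a positive $S_1$-power remains on one side and a positive $S_2$-power on the other, handled directly by forward separation; (iii) both residuals concentrate on one side, yielding $S_1^aS_2^b(R_\ind)\cap R_\ind$ with $a,b\geq 1$. Subcase (iii) is the only one not immediate from the two separation hypotheses already in hand, and is the main obstacle worth flagging. I will resolve it by a location argument: commutativity together with Lemma~\ref{098}(2) gives $S_1^aS_2^b(R_\ind)\subseteq S_1(\Lambda_\ind)\subseteq[0,\alpha]$, while $R_\ind=S_3(\Lambda_\ind)\subseteq[1-\gamma,1]$, and these intervals are disjoint because $\alpha+\gamma<1$ (in fact $<2/9$ under the standing assumption $\alpha,\beta,\gamma\in(0,1/9)$). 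Everything else is bookkeeping with commutativity and injectivity.
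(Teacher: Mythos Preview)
Your proof is correct and follows essentially the same strategy as the paper: the reverse direction is obtained by specializing to pairs $(m,0)$ and $(0,n)$, and the forward direction factors out the common prefix $S_1^{\min(m_1,m_2)}S_2^{\min(n_1,n_2)}$ via commutativity and injectivity, then analyses the residual intersection. Your treatment is in fact more careful than the paper's: the paper dispatches all residual cases with a single appeal to ``the definition the forward separated property,'' whereas you correctly isolate subcase~(iii), $S_1^aS_2^b(R_\ind)\cap R_\ind$ with $a,b\geq 1$, which is covered neither by Definition~\ref{080} nor by Lemma~\ref{098}(1), and you close it with the clean location argument $S_1^aS_2^b(R_\ind)\subseteq[0,\alpha]$ versus $R_\ind\subseteq[1-\gamma,1]$.
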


\begin{proof}
	First, we assume that $\mathcal{S}_\ind$ is forward separated. Let $(m_1, n_1)\neq(m_2, n_2)$, then
	\begin{align}
	\begin{split}
	S_1^{m_1}S_2^{n_1}(R_\ind )=S_1^{min\{ m_1, m_2 \}  }S_2^{ min\{ n_1, n_2  \}  }(S_1^{k_1}S_2^{l_1}(R_\ind ))\\
	S_1^{m_2}S_2^{n_2}(R_\ind )=S_1^{min\{ m_1, m_2 \}  }S_2^{ min\{ n_1, n_2  \}  }(S_1^{k_2}S_2^{l_2}(R_\ind))
	\end{split}
	\end{align}
	hold.
	At least one of $k_1, k_2$ is zero and one of $l_1, l_2$ is zero. Thus, by the definition the forward separated property, we get that \eqref{eq:disun} holds.
	
	On the other hand, assume that $\Lambda_\ind \backslash\{  0\}=\bigsqcup_{n, m=0}^\infty S_1^mS_2^n (R_\ind)$. Then we can get that $\mathcal{S}_\ind$ is forward separated by using the conditon for the indeces $(m, 0)$ and $(0, n)$.
\end{proof}

\section{Feng and Hu's formula under forward separation}\label{sec:calc}

In this section, we describe Feng and Hu's formula \cite[Theorem~2.8]{feng2009dimension} in details. First, we need some basic properties of the conditional measures and conditional expectations.

\subsection{Conditional measures and expectations}

Let $Z$ be a compact metric space. We consider the probability space $(Z, \mathcal{B}, \mu)$, where $\mathcal{B}$ is the Borel $\sigma$-algebra of $Z$ and $\mu$ is a probability measure on $Z$.

\begin{definition}
Let $(Z, \mathcal{B}, \mu )$ be a probability space as above and let $\mathcal{G}\subseteq \mathcal{B}$ be an arbitrary sub-$\sigma$-algebra. If $ \varphi\in L^1(Z, \mathcal{B}, \mu )$, then the function $\psi\in L^1(Z, \mathcal{B}, \mu )$ is the conditional expectation of $\varphi$ with respect to the $\sigma$-algebra $\mathcal{G}$, if
	\begin{enumerate}
		\item $\psi$ is $\mathcal{G}$-measurable,
		\item for every $G\in \mathcal{G} $
		\begin{align}
		\int\displaylimits_{Z}\varphi(x) \kar{G}{x}\der \mu(x)=\int\displaylimits_{Z} \psi(x)\kar{G}{x}\der \mu(x).
		\end{align}
	\end{enumerate}
\end{definition}

\begin{theorem}
	Let $\mathcal{G}\subseteq \mathcal{B}$ be an arbitrary $\sigma$-algebra.
	If $\psi$ and $\tilde{\psi}$ are conditional expectations of the function $\varphi\in L^1(Z, \mathcal{B}, \mu)$ with respect to  $\mathcal{G}$, then $\psi(z)=\tilde{\psi}(z)$ for $\mu$-almost every $z\in Z$.
\end{theorem}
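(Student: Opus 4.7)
The plan is to show that the set $\{\psi \ne \tilde{\psi}\}$ has $\mu$-measure zero by slicing it into countably many $\mathcal{G}$-measurable pieces on which $\psi - \tilde{\psi}$ is bounded away from zero, and using the defining property of conditional expectation to show that each such piece is a null set.

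First, by symmetry it suffices to prove $\mu(\{\psi > \tilde{\psi}\}) = 0$, since the reverse inequality follows by swapping the roles of $\psi$ and $\tilde{\psi}$. I would write
\[
\{\psi > \tilde{\psi}\} = \bigcup_{n=1}^{\infty} A_n, \qquad A_n := \{z \in Z : \psi(z) - \tilde{\psi}(z) \ge 1/n\}.
\]
Since both $\psi$ and $\tilde{\psi}$ are $\mathcal{G}$-measurable by clause (1) of the definition, so is their difference, and hence $A_n \in \mathcal{G}$ for every $n$. Applying clause (2) to the set $A_n$ for each of the two conditional expectations gives
\[
\int_Z \psi(z)\,\kar{A_n}{z}\,\der\mu(z) = \int_Z \varphi(z)\,\kar{A_n}{z}\,\der\mu(z) = \int_Z \tilde{\psi}(z)\,\kar{A_n}{z}\,\der\mu(z),
\]
and subtracting the outer expressions yields $\int_{A_n} (\psi(z) - \tilde{\psi}(z))\,\der\mu(z) = 0$. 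On $A_n$, however, the integrand is at least $1/n > 0$, which forces $\mu(A_n) = 0$; taking the countable union gives $\mu(\{\psi > \tilde{\psi}\}) = 0$, and combining with the symmetric estimate concludes the proof.

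No real obstacle is expected here: this is the classical uniqueness statement for conditional expectations, and the only ingredients are the two defining clauses together with countable subadditivity of $\mu$. The one technical point to verify is the measurability of the slicing sets $A_n$, which is immediate from the $\mathcal{G}$-measurability clause applied to both functions.
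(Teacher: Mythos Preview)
Your argument is correct and is the standard proof of almost-everywhere uniqueness of conditional expectation. The paper, however, does not supply its own proof of this theorem at all: it simply states the result and immediately introduces the notation $\mathbb{E}_\mu(\varphi\mid\mathcal{G})$, treating uniqueness as a well-known fact from measure theory. So there is nothing to compare against; your write-up fills in what the paper leaves implicit.
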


We denote the conditional expectation of $\varphi\in L^1(Z, \mathcal{B}, \mu)$ with respect to $\mathcal{G}$ with $\mathbb{E}_\mu (\varphi|\mathcal{G})$.

For a collection $B$ of subsets of $Z$, denote $\sigma(B)$ the generated $\sigma$-algebra by the set $B$. If $\mathcal{A}_i\subseteq \mathcal{B}$ are $\sigma$-algebras for all $i=1, 2, \dots $, then denote $\displaystyle \bigvee_{i=1}^\infty \mathcal{A}_i$ the common refinement of the $\sigma$-algebras, that is $\displaystyle \bigvee_{i=1}^\infty \mathcal{A}_i=\sigma\left(\displaystyle \bigcup_{i=1}^\infty \mathcal{A}_i\right)$.

We call $\mathcal{P}\subseteq \mathcal{B}$ a partition of $Z$, if for every $P_1\neq P_2\in \mathcal{P}$, $P_1\cap P_2=\emptyset$ and $\displaystyle \bigcup_{P\in \mathcal{P}}P=Z$. For $z\in Z$, the set $\mathcal{P}(z)$ denotes the unique $\mathcal{P}(z)\in \mathcal{P}$ such that $z\in \mathcal{P}(z)$.

Let $\mathcal{F}$ be a $\sigma$-algebra such that there exists some $E_1, E_2, \ldots \in\mathcal{B} $ for which
\begin{align}\label{088}
\mathcal{F}=\bigvee_{i=1}^\infty  \{\emptyset, E_i, Z/E_i, Z   \}.
\end{align}
For every $n=1,2, \dots$  let $\mathcal{P}_n$ be a partition of $Z$ such that
\begin{align}
\sigma(\mathcal{P}_n)=\bigvee_{i=1}^n \{\emptyset, E_i, Z/E_i, Z\},
\end{align}

\begin{definition}
	The set $\{ \mu_z  \}_{z\in Z}$ of Borel probability measures on $Z$ is a system of conditional measures of $\mu$ with respect to the $\sigma$-algebra $\mathcal{F}$, if
\begin{enumerate}
		\item $\text{ for every } E\in \mathcal{F}\text{,  }\,  z\in E \quad     \mu_z(E)=1$ holds for $\mu$-almost every $z\in Z$,
		\item $\text{ for every bounded measurable function }\varphi: Z\rightarrow \R$ the function $z\mapsto \displaystyle\int\displaylimits_{Z} \varphi(x) \, \der \mu_z(x)$ is $\mathcal{F} $-measurable and
\begin{align}
		\displaystyle\int\displaylimits_{Z} \varphi(x)   \,  \der \mu(x) = \displaystyle\int\displaylimits_{Z}\displaystyle\int\displaylimits_{Z} \varphi(x) \, \der \mu_z(x) \, \, \der \mu(z).
\end{align}
\end{enumerate}
\end{definition}

\begin{theorem}\label{076}
	If $\{\mu_z   \}_{z\in Z}$ and $\{ \nu_z  \}_{z\in Z}$ are two systems of condtional measures of $\mu$ with respect to $\mathcal{F}$, then $\mu_z=\nu_z$ for $\mu$-almost every $z\in Z
	$.
\end{theorem}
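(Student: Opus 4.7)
The plan is to identify, for each bounded measurable $\varphi$, the map $z \mapsto \int_Z \varphi \, \der\mu_z$ as a version of the conditional expectation $\mathbb{E}_\mu(\varphi \mid \mathcal{F})$, and then use separability of $C(Z)$ to combine the resulting exceptional null sets into a single one.

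First I would fix a bounded measurable $\varphi: Z \to \R$ and set $\Psi_\varphi(z) := \int_Z \varphi(x) \, \der\mu_z(x)$. Property (2) of the definition directly states that $\Psi_\varphi$ is $\mathcal{F}$-measurable, and applying (2) instead to the bounded measurable function $\varphi(x)\kar{G}{x}$ for $G \in \mathcal{F}$ yields
$$\int_G \varphi \, \der\mu \;=\; \int_Z \int_Z \varphi(x)\kar{G}{x} \, \der\mu_z(x) \, \der\mu(z).$$
By property (1) applied to both $G$ and its complement $Z \setminus G$, for $\mu$-almost every $z$ one has $\mu_z(G) = \kar{G}{z}$, so the inner integral equals $\kar{G}{z}\Psi_\varphi(z)$ $\mu$-a.e., giving $\int_G \varphi \, \der\mu = \int_G \Psi_\varphi \, \der\mu$. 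Together with $\mathcal{F}$-measurability this makes $\Psi_\varphi$ a version of $\mathbb{E}_\mu(\varphi \mid \mathcal{F})$. Verbatim the same reasoning shows $z \mapsto \int \varphi \, \der\nu_z$ is another version, so by the uniqueness of conditional expectations there is a $\mu$-null set $N_\varphi$ outside of which $\int \varphi \, \der\mu_z = \int \varphi \, \der\nu_z$.

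Next I would remove the dependence of $N_\varphi$ on $\varphi$. Since $Z$ is a compact metric space, $C(Z)$ is separable in the sup norm, so I can pick a countable dense family $\{\varphi_k\}_{k \geq 1} \subset C(Z)$ and set $N := \bigcup_{k \geq 1} N_{\varphi_k}$, still a $\mu$-null set. For $z \notin N$ the equality $\int \varphi_k \, \der\mu_z = \int \varphi_k \, \der\nu_z$ holds for every $k$, and by uniform approximation it extends to all $\varphi \in C(Z)$. The Riesz representation theorem then forces $\mu_z = \nu_z$ as Borel probability measures on $Z$ for every $z \notin N$, which is exactly the conclusion.

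The main obstacle is really just the bookkeeping needed to ensure the exceptional null set does not depend on $\varphi$; this is where compactness of $Z$ (and hence separability of $C(Z)$) enters in an essential way. A secondary delicacy is the passage from property (1) to the identity $\mu_z(G) = \kar{G}{z}$ used above, but applying (1) to $G$ and to $Z \setminus G$ separately resolves it immediately. No step seems to require machinery beyond the uniqueness of conditional expectations plus standard measure-theoretic approximation, so I would expect the proof itself to be short once the above identification is written out carefully.
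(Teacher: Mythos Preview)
Your argument is correct and essentially the standard one: identify $z\mapsto\int\varphi\,\der\mu_z$ with $\mathbb{E}_\mu(\varphi\mid\mathcal{F})$ (this is precisely the content of Theorem~\ref{077} in the paper), invoke $\mu$-a.e.\ uniqueness of conditional expectations, and then collapse the countable family of null sets using a dense sequence in $C(Z)$ together with Riesz representation. The only point worth tightening is the passage ``$\mu_z(G)=\kar{G}{z}$ $\mu$-a.e.'': the exceptional null set here depends on $G$, but since you are using this identity only inside an outer $\mu$-integral for a fixed $G$, that dependence is harmless---you might say so explicitly.

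As for the paper: it does not actually prove Theorem~\ref{076} at all, but simply defers to Rokhlin \cite{rokhlin}. So your route is not so much ``different'' as ``the only route present''. What your write-up buys is a self-contained argument that avoids sending the reader to an external reference; what the citation buys is brevity and coverage of more general settings (Rokhlin's theory handles measurable partitions in standard probability spaces without requiring $Z$ to be compact metric). Either is acceptable here, since the paper only ever applies the result on the compact metric space~$\Sigma$.
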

The proof is in \cite{rokhlin}.

\begin{theorem}\label{078}
	The limit of the measures
	\begin{align}
	\mu_z^\mathcal{F}=\lim_{n\to \infty }\frac{\rest{\mu}{\mathcal{P}_n(z)}}{\mu(\mathcal{P}_n(z))}\text{ exists for }\mu\text{-almost every }z\in Z,
	\end{align}
	where the limit is meant in the weak-star topology.
	\par
	Moreover, the set $\{  \mu_z^\mathcal{F} \}_{z\in Z}$ is a system of conditional measures of $\mu$ with respect to the $\sigma$-algebra $\mathcal{F}$.
\end{theorem}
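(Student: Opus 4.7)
The plan is to build $\mu_z^{\mathcal{F}}$ by a martingale convergence argument on continuous test functions, then upgrade weak-star convergence to the defining properties of conditional measures via a monotone class step.

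First I would fix a bounded continuous $\varphi\colon Z\to\mathbb{R}$ and define $\varphi_n(z):=\mu(\mathcal{P}_n(z))^{-1}\int_{\mathcal{P}_n(z)}\varphi\,d\mu$ on the set where $\mu(\mathcal{P}_n(z))>0$ (and arbitrarily elsewhere). Since $\sigma(\mathcal{P}_n)\subset\sigma(\mathcal{P}_{n+1})$, the sequence $(\varphi_n,\sigma(\mathcal{P}_n))$ is a uniformly bounded martingale, and Doob's martingale convergence theorem yields $\varphi_n(z)\to \mathbb{E}_\mu(\varphi\mid\mathcal{F})(z)$ for $\mu$-almost every $z$. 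The next step is to make this limit simultaneous in $\varphi$: since $Z$ is compact metric, $C(Z)$ is separable; pick a countable dense $\{\varphi^{(k)}\}\subset C(Z)$ and let $N$ be the union of the countably many exceptional null sets. For $z\notin N$, the assignment $\varphi^{(k)}\mapsto\lim_n\varphi^{(k)}_n(z)$ is a positive linear functional of norm $\le 1$ on the dense set, and the uniform bound $|\varphi_n(z)|\le\|\varphi\|_\infty$ lets me extend it to all of $C(Z)$ and conclude that $\varphi_n(z)\to L_z(\varphi)$ for every $\varphi\in C(Z)$. The Riesz representation theorem then supplies a Borel probability measure $\mu_z^{\mathcal{F}}$ with $L_z(\varphi)=\int\varphi\,d\mu_z^{\mathcal{F}}$, which is precisely the weak-star limit claimed in the statement.

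Next I would verify the two clauses of the definition of a system of conditional measures. Measurability of $z\mapsto\int\varphi\,d\mu_z^{\mathcal{F}}$ for $\varphi\in C(Z)$ is immediate, because it is the $\mu$-a.e.\ limit of the $\sigma(\mathcal{P}_n)$-measurable functions $\varphi_n$, hence $\mathcal{F}$-measurable. The identity $\int_Z\int_Z\varphi\,d\mu_z^{\mathcal{F}}\,d\mu(z)=\int_Z\varphi\,d\mu$ follows by dominated convergence from the trivial relation $\int_Z\varphi_n\,d\mu=\int_Z\varphi\,d\mu$. A standard monotone class argument then extends both properties from continuous $\varphi$ to bounded Borel $\varphi$, proving clause (2). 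Clause (1) is deduced from this: for any $E\in\mathcal{F}$, the function $z\mapsto\mu_z^{\mathcal{F}}(E)$ is an $\mathcal{F}$-measurable version of $\mathbb{E}_\mu(\mathbbm{1}_E\mid\mathcal{F})=\mathbbm{1}_E$, so $\mu_z^{\mathcal{F}}(E)=1$ for $\mu$-almost every $z\in E$.

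The step I expect to require the most care is passing from weak-star convergence (against continuous test functions) to statements about measurable sets, since indicator functions are not continuous. The cleanest way around this obstacle is precisely the route above: establish clause (2) first for continuous $\varphi$ where weak-star convergence works verbatim, extend by a monotone class argument to all bounded measurable $\varphi$, and only then read off clause (1) as the $L^1$-characterization of the conditional expectation. A small additional point to keep honest is handling atoms of the partitions, i.e.\ points $z$ for which some $\mathcal{P}_n(z)$ might have zero $\mu$-measure; these $z$ form a $\mu$-null set and can be absorbed into the exceptional set $N$.
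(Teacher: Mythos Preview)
The paper does not prove this theorem itself; it simply records ``The proof can be found in \cite{simmons}.'' Your martingale-plus-Riesz argument is the standard route to this result and is essentially correct, so there is nothing to compare against an in-paper proof.

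One minor tightening: in your last paragraph you deduce clause~(1) from clause~(2), but clause~(2) as stated in the paper only gives $\int_Z\int\varphi\,d\mu_z\,d\mu=\int\varphi\,d\mu$, which by itself is not enough to identify $z\mapsto\mu_z(E)$ with $\mathbb{E}_\mu(\mathbbm{1}_E\mid\mathcal{F})$. You actually proved something stronger along the way---Doob's theorem gives $\int\varphi\,d\mu_z^{\mathcal{F}}=\mathbb{E}_\mu(\varphi\mid\mathcal{F})(z)$ $\mu$-a.e.\ for $\varphi\in C(Z)$, and it is \emph{this} identity (not just the integrated version) that should be pushed through the monotone class argument. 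Once you have $\int\varphi\,d\mu_z^{\mathcal{F}}=\mathbb{E}_\mu(\varphi\mid\mathcal{F})(z)$ for bounded measurable $\varphi$, clause~(1) is immediate as you say. Make that explicit and the sketch is complete.
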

The proof can be found in \cite{simmons}.

\begin{theorem}\label{077}
	Let $\{ \mu_z \}_{z\in Z}$ be a system of conditional measures of $\mu$ with respect to $\mathcal{F}$. Let $\varphi: Z\rightarrow \R$ is bounded and measurable, then the function
	\begin{align}
	\begin{split}
	\psi:  Z&\rightarrow \R \text{ for which }\\
	\psi(z)=\int\displaylimits_{Z}\varphi(x)\der \mu_z(x)&\text{ for }\mu\text{-almost every }z\in Z
	\end{split}
	\end{align}
	is the conditional expectation of $\varphi$ with respect to $\mathcal{F}$, thus $\mathbb{E}_\mu(\varphi|\mathcal{F})=\psi$.
\end{theorem}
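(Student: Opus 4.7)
The plan is to verify directly the two defining properties of conditional expectation for the candidate function $\psi(z)=\int_Z \varphi(x)\,\der\mu_z(x)$. Both properties come out by unpacking the definition of a system of conditional measures, so this is essentially a bookkeeping argument rather than one with a serious technical core.

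First, I would observe that the $\mathcal{F}$-measurability of $\psi$ is immediate from property (2) in the definition of a system of conditional measures, which explicitly asserts that $z\mapsto \int_Z\varphi(x)\,\der\mu_z(x)$ is $\mathcal{F}$-measurable whenever $\varphi$ is bounded and measurable.

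Second, for the integral identity, fix $G\in\mathcal{F}$ and apply property (2) to the bounded measurable function $\varphi\cdot\kar{G}{\cdot}$, obtaining
\begin{align*}
\int_Z \varphi(x)\kar{G}{x}\,\der\mu(x)=\int_Z\left(\int_Z \varphi(x)\kar{G}{x}\,\der\mu_z(x)\right)\der\mu(z).
\end{align*}
The key step is to identify the inner integral with $\kar{G}{z}\,\psi(z)$ for $\mu$-almost every $z$. Since $G\in\mathcal{F}$ and $Z\setminus G\in\mathcal{F}$, property (1) gives that $\mu_z(G)=1$ for $\mu$-almost every $z\in G$, and $\mu_z(Z\setminus G)=1$ for $\mu$-almost every $z\notin G$. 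Consequently $\kar{G}{\cdot}$ is $\mu_z$-almost surely equal to the constant $\kar{G}{z}$, so
\begin{align*}
\int_Z \varphi(x)\kar{G}{x}\,\der\mu_z(x)=\kar{G}{z}\int_Z \varphi(x)\,\der\mu_z(x)=\kar{G}{z}\,\psi(z)
\end{align*}
for $\mu$-a.e.\ $z\in Z$. Substituting this back into the iterated integral yields $\int_Z \varphi\kar{G}{\cdot}\,\der\mu=\int_Z \psi\kar{G}{\cdot}\,\der\mu$, which together with $\mathcal{F}$-measurability of $\psi$ is precisely the definition of $\mathbb{E}_\mu(\varphi\mid\mathcal{F})$. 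The essential-uniqueness of the conditional expectation then identifies $\psi$ with $\mathbb{E}_\mu(\varphi\mid\mathcal{F})$ up to a $\mu$-null set.

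The only subtlety — and the closest thing to an obstacle — is to make sure the $\mu$-null exceptional sets arising when invoking property (1) on $G$ and on $Z\setminus G$ are genuinely $\mu$-null (not just $\mu_z$-null for individual $z$), so that the pointwise identity $\int\varphi\kar{G}{\cdot}\,\der\mu_z=\kar{G}{z}\psi(z)$ holds on a set of full $\mu$-measure and may be integrated against $\der\mu(z)$. This is immediate since property (1) is a statement about $\mu$-almost every $z$, applied in turn to the two $\mathcal{F}$-sets $G$ and $Z\setminus G$ whose union is $Z$.
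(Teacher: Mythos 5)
Your proof is correct and complete. Note that the paper does not actually prove this statement --- it only cites an external reference --- and your direct verification of the two defining properties of conditional expectation (measurability of $\psi$ from property (2) of the disintegration, and the integral identity obtained by applying property (2) to $\varphi\cdot\kar{G}{\cdot}$ and then using property (1) on both $G$ and $Z\setminus G$ to replace the inner integral by $\kar{G}{z}\psi(z)$ for $\mu$-a.e.\ $z$) is exactly the standard argument one would find there; your closing remark about the exceptional null sets depending only on the two fixed sets $G$ and $Z\setminus G$ correctly disposes of the one genuine subtlety.
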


The proof is in \cite{simmons}.

\subsection{Dimension of self-similar measures}

First, we introduce some notations in general, which will be used later for our specific case.

Let $(Z, \mathcal{B}, \mu)$ be as in the previous section. Let $\xi\subseteq \mathcal{B}$ be a countable partition of $Z$. Let $\mathcal{A}\subseteq \mathcal{B}$ be an arbitrary $\sigma$-algebra. Then $I_\mu(\xi|\mathcal{A})$ denotes the {\it conditional information} of the partition $\xi$ given by $\mathcal{A}$, which is
\begin{align}\label{085}
I_\mu(\xi|\mathcal{A})(x)=-\sum_{E\in \xi}\kar{E}{x} \log [\, \mathbb{E}_\mu (\mathbbm{1}_E | \mathcal{A})(x)\, ].
\end{align}

The {\it conditional entropy} of $\xi$ given $\mathcal{A}$ is defined by the following formula
\begin{align}\label{084}
H_\mu(\xi| \mathcal{A})=\int\displaylimits_Z I_\mu(\xi|\mathcal{A})\der \mu.
\end{align}

Now, we consider a self-similar IFS $\mathcal{S}=\{ S_1, \ldots , S_m\}$ on the interval $[0, 1]$. The symbolic space is $\Sigma=\{  1, \ldots, m  \}^{\mathbb{N}^+}$ and the attractor of $\mathcal{S}$ is $\Lambda$. We denote the natural projection of $\mathcal{S}$ with $\pi$. We study the probability space $(\Sigma, \mathcal{C}, \mu)$, where $\mathcal{C}$ is the $\sigma$-algera generated by the cylinder sets, and $\mu$ is a $\sigma$-invariant probability measure.
We define the measure $\nu$ on $\Lambda$ with $\nu= \pi_*\mu=\mu\circ\pi^{-1}$. The set $\mathcal{P}=\{ [1], \dots, [m]   \}$ is a Borel partition of $\Sigma$ and $\gamma$ denotes the Borel $\sigma$-algebra on $[0, 1]$. We define the {\it projection entropy} of $\mu$ under $\pi$ to the IFS $\mathcal{S}$ as
\begin{align}
h_\pi (\sigma, \mu)=H_\mu(\mathcal{P}|\sigma^{-1}\pi^{-1}\gamma)-H_\mu(\mathcal{P}|\pi^{-1}\gamma).
\end{align}

We state here a special case of \cite[Theorem~2.8]{feng2009dimension} for self-similar systems.

\begin{theorem}[Feng-Hu]\label{086}
	Let $\mu$ be a $\sigma$-invariant, ergodic Borel probabilty measure on $\Sigma$, and let $\mathcal{S}$ and $\pi$ be as above and let $\nu=\pi_*\mu=\mu\circ \pi^{-1}$. Then
	\begin{align}
	d_\nu (x)=\frac{h_\pi(\sigma, \mu )}{-\sum_{i=1}^m\mu([i])\log|r_i|}\text{ for }\nu\text{-almost every }x\in \Lambda.
	\end{align}
\end{theorem}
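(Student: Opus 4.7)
The plan is to reduce the assertion to the general result of Feng and Hu \cite{feng2009dimension}, whose proof I would sketch as follows. The two quantities we need to extract are a numerator (an entropy-like object measuring the ``new information'' gained per symbol beyond what is already determined by the projected point) and a denominator (a Lyapunov exponent governing the size of cylinders), and the strategy is to link both to local behaviour of $\nu$ through the symbolic representation.

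First I would set up the denominator. For $\mu$-a.e.\ $\ii\in\Sigma$, Birkhoff's ergodic theorem applied to the $\sigma$-invariant, ergodic measure $\mu$ and the function $\ii\mapsto\log|r_{i_1}|$ gives
\begin{equation*}
\frac{1}{n}\log|r_{\ii|_n}|=\frac{1}{n}\sum_{k=1}^{n}\log|r_{i_k}|\;\longrightarrow\;\sum_{i=1}^{m}\mu([i])\log|r_i|=:-\chi.
\end{equation*}
Hence the cylinder $\pi([\ii|_n])$ has diameter comparable to $e^{-n\chi}$, and so for a scale $r>0$ the ``correct'' cylinder level is $n(r)\sim-\log r/\chi$. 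This is the step that produces the denominator of the formula.

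Next I would address the numerator. Introduce the sub-$\sigma$-algebra $\mathcal{F}=\pi^{-1}\gamma$ of $\mathcal{C}$; by Theorem~\ref{078}, the associated system $\{\mu_\ii^{\mathcal{F}}\}$ of conditional measures exists and each $\mu_\ii^{\mathcal{F}}$ is supported on the fibre $\pi^{-1}(\pi(\ii))$. Define the level-$n$ information
\begin{equation*}
I_n(\ii)=-\log\mu_\ii^{\mathcal{F}}\bigl([\ii|_n]\bigr),
\end{equation*}
which is exactly the conditional information of the refined partition $\mathcal{P}_n=\bigvee_{k=0}^{n-1}\sigma^{-k}\mathcal{P}$ given $\mathcal{F}$. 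The cocycle identity
\begin{equation*}
I_{n+1}(\ii)=I_n(\sigma\ii)+\bigl[I_\mu(\mathcal{P}\mid\sigma^{-1}\pi^{-1}\gamma)(\ii)-I_\mu(\mathcal{P}\mid\pi^{-1}\gamma)(\ii)\bigr]+ \text{error}_n(\ii)
\end{equation*}
is the crucial algebraic step: writing $\mu_\ii^{\mathcal{F}}([\ii|_{n+1}])$ in terms of the conditional measures for $\sigma^{-1}\mathcal{F}$ and invoking $\sigma$-invariance of $\mu$ yields precisely the two conditional informations whose expectations are $H_\mu(\mathcal{P}\mid\sigma^{-1}\pi^{-1}\gamma)$ and $H_\mu(\mathcal{P}\mid\pi^{-1}\gamma)$. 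Averaging and applying Birkhoff/Maker to the cocycle gives
\begin{equation*}
\frac{1}{n}I_n(\ii)\;\longrightarrow\; H_\mu(\mathcal{P}\mid\sigma^{-1}\pi^{-1}\gamma)-H_\mu(\mathcal{P}\mid\pi^{-1}\gamma)=h_\pi(\sigma,\mu)\quad\mu\text{-a.e.}
\end{equation*}

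Finally I would assemble the two pieces and compute the local dimension. For $x=\pi(\ii)$ and $r\approx e^{-n\chi}$ with $n=n(r)$, one has, up to a bounded multiplicative factor controlled by the Lipschitz constants of the $S_i$,
\begin{equation*}
\nu(B(x,r))=\mu(\pi^{-1}B(x,r))\asymp\mu_\ii^{\mathcal{F}}\bigl([\ii|_{n}]\bigr)=e^{-I_n(\ii)},
\end{equation*}
so that
\begin{equation*}
\frac{\log\nu(B(x,r))}{\log r}\;\sim\;\frac{-I_n(\ii)}{-n\chi}\;\longrightarrow\;\frac{h_\pi(\sigma,\mu)}{\chi}
\end{equation*}
for $\mu$-a.e.\ $\ii$, hence for $\nu$-a.e.\ $x$. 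The main obstacle is precisely the approximation $\nu(B(x,r))\asymp\mu_\ii^{\mathcal{F}}([\ii|_n])$: one must argue that points in $B(x,r)$ whose symbolic codes do \emph{not} begin with $\ii|_n$ contribute a negligible mass on the exponential scale, and symmetrically that all preimages of $B(x,r)$ lying in $[\ii|_n]$ are captured. This boundary control is handled in \cite{feng2009dimension} via a combinatorial covering argument for the ball by cylinders together with the ergodic estimates above, and this is where essentially all the technical work resides. The remaining step, deducing the stated formula from exact-dimensionality and identifying the local-dimension limit, is then immediate.
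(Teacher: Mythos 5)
First, a point of comparison: the paper offers no proof of Theorem~\ref{086} at all --- it is quoted as a special case of \cite[Theorem~2.8]{feng2009dimension}, and the only verification needed is that the hypotheses (a contracting self-similar IFS, $\mu$ a $\sigma$-invariant ergodic measure on $\Sigma$) are met. Your attempt to sketch the internal mechanism of Feng and Hu's proof is therefore a genuinely different undertaking, and unfortunately the sketch contains a real error in its two central identifications, not just a technical gap.

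Concretely, with $\mathcal{F}=\pi^{-1}\gamma$ and $I_n(\ii)=-\log\mu_\ii^{\mathcal{F}}([\ii|_n])$, it is \emph{not} true that $\tfrac1n I_n(\ii)\to h_\pi(\sigma,\mu)$, nor that $\nu(B(x,r))\asymp\mu_\ii^{\mathcal{F}}([\ii|_n])$. Test both claims on an IFS satisfying the strong separation condition: there $\pi$ is injective, so $\pi^{-1}\gamma$ is (up to completion) the full Borel $\sigma$-algebra of $\Sigma$, hence $\mu_\ii^{\mathcal{F}}=\delta_\ii$ and $I_n\equiv 0$, while $h_\pi(\sigma,\mu)$ equals the full shift entropy $h(\sigma,\mu)>0$ and $\nu(B(x,r))\to 0$. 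The correct statements are $\tfrac1n I_n(\ii)\to h(\sigma,\mu)-h_\pi(\sigma,\mu)$ (the ``fibre'' entropy conditioned on the projection) and
\[
\nu(B(x,r))\approx \frac{\mu([\ii|_n])}{\mu_\ii^{\mathcal{F}}([\ii|_n])},\qquad r\approx e^{-n\chi},
\]
so that $-\tfrac1n\log\nu(B(x,r))\to h(\sigma,\mu)-\bigl(h(\sigma,\mu)-h_\pi(\sigma,\mu)\bigr)=h_\pi(\sigma,\mu)$ after also invoking Shannon--McMillan--Breiman for $\mu([\ii|_n])$, a step your sketch omits entirely. Your two errors compensate exactly, which is why you land on the correct formula, but the intermediate assertions --- including the displayed cocycle identity, whose bracketed increment has expectation $h_\pi(\sigma,\mu)$ and hence cannot be the increment of $I_n$ up to a negligible error --- are false as written. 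The Birkhoff argument for the denominator is fine. For the purposes of this paper the clean fix is to do what the authors do and simply cite \cite[Theorem~2.8]{feng2009dimension}; if you do want a sketch, the object to compare with $\nu$ of a ball is the ratio $\mu([\ii|_n])/\mathbb{E}_\mu(\mathbbm{1}_{[\ii|_n]}\mid\pi^{-1}\gamma)(\ii)$, and the boundary-control difficulty you correctly anticipate is carried out in \cite{feng2009dimension} for that ratio, not for $\mu_\ii^{\mathcal{F}}([\ii|_n])$ alone.
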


In order to calculate $h_\pi(\sigma,\mu)$, we need the following two lemma.

\begin{lemma}\label{083}
	Let $\mathcal{S}=\{ S_1, \dots, S_m\}$ be an IFS on $[0, 1]$. If $\mu$ is a Bernoulli measure on $\Sigma$ for the probability vector $\vect{p}=(p_1, \dots, p_m)$, then
	\begin{align}
	H_\mu(\mathcal{P}|\sigma^{-1}\pi^{-1}\gamma)=-\sum_{k=1}^m p_k \log(p_k) .
	\end{align}
\end{lemma}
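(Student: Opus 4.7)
The plan is to reduce everything to a clean independence statement and then just unfold the definitions of conditional information and conditional entropy given in \eqref{085} and \eqref{084}.

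First I would unpack the sub-$\sigma$-algebra $\sigma^{-1}\pi^{-1}\gamma$. By construction, $\pi\circ\sigma\colon\Sigma\to[0,1]$ sends $\ii=(i_1,i_2,\ldots)$ to $\pi(i_2,i_3,\ldots)$, so it is a function of the coordinates $i_2,i_3,\ldots$ alone. Consequently every set in $\sigma^{-1}\pi^{-1}\gamma=(\pi\circ\sigma)^{-1}\gamma$ lies in the product $\sigma$-algebra generated by the coordinates from position $2$ onward. On the other hand, the partition $\mathcal{P}=\{[1],\ldots,[m]\}$ is exactly the partition by the first coordinate $i_1$, so each indicator $\mathbbm{1}_{[k]}$ depends only on $i_1$.

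Next I would invoke the defining property of the Bernoulli measure: under $\mu=\pv^{\mathbb{N}^+}$ the coordinates $i_1,i_2,\ldots$ are mutually independent, hence the $\sigma$-algebra generated by $i_1$ is independent of the $\sigma$-algebra generated by $(i_2,i_3,\ldots)$, and in particular independent of its sub-$\sigma$-algebra $\sigma^{-1}\pi^{-1}\gamma$. By the standard characterization of conditional expectation under independence,
\begin{align*}
\mathbb{E}_\mu\!\left(\mathbbm{1}_{[k]}\,\big|\,\sigma^{-1}\pi^{-1}\gamma\right)(\ii)=\mu([k])=p_k\qquad\text{for }\mu\text{-a.e. }\ii\in\Sigma.
\end{align*}

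Substituting this into the definition \eqref{085} of the conditional information gives, for $\mu$-a.e.\ $\ii$,
\begin{align*}
I_\mu(\mathcal{P}\,|\,\sigma^{-1}\pi^{-1}\gamma)(\ii)=-\sum_{k=1}^m\mathbbm{1}_{[k]}(\ii)\log p_k,
\end{align*}
and integrating with respect to $\mu$ using \eqref{084} together with $\mu([k])=p_k$ yields
\begin{align*}
H_\mu(\mathcal{P}\,|\,\sigma^{-1}\pi^{-1}\gamma)=-\sum_{k=1}^m p_k\log p_k,
\end{align*}
as required.

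There is no real obstacle here; the only subtlety worth writing out carefully is the verification that $\sigma^{-1}\pi^{-1}\gamma$ really is contained in $\sigma(i_2,i_3,\ldots)$, since the whole argument hinges on that containment so that Bernoulli independence can be applied. Once that is stated, the remainder is just definition-chasing.
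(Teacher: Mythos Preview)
Your proof is correct and follows essentially the same route as the paper: both arguments establish that $\sigma(\mathcal{P})$ and $\sigma^{-1}\pi^{-1}\gamma$ are independent under the Bernoulli measure, deduce $\mathbb{E}_\mu(\mathbbm{1}_{[k]}\mid\sigma^{-1}\pi^{-1}\gamma)=p_k$, and then unfold \eqref{085} and \eqref{084}. Your version is slightly more explicit about \emph{why} independence holds (via the containment $\sigma^{-1}\pi^{-1}\gamma\subseteq\sigma(i_2,i_3,\ldots)$), whereas the paper simply asserts it as easy to check.
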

\begin{proof}
	Let $[k]\in \mathcal{P}$, then the generated $\sigma$-algebra by the function $\mathbbm{1}_{[k]}$ is\linebreak $\sigma(\mathbbm{1}_{[k]})=\{\emptyset, [k], \Sigma/[k], \Sigma \}\subseteq \sigma(\mathcal{P})$, where $\sigma(\mathcal{P})$ is the generated $\sigma$-algebra by $\mathcal{P}$. We can easily check that $\sigma(\mathcal{P})$ and $\sigma^{-1}\pi^{-1}\gamma$ are independent $\sigma$-algebras. We know that if $\varphi\in L^1(\Sigma, \mathcal{C}, \mu)$, $\sigma(\varphi)$ and $\mathcal{G}$ are independent $\sigma$-algebras, then $\mathbb{E}_\mu(\varphi|\mathcal{G})=\mathbb{E}_\mu(\varphi)=\int \varphi \der \mu $. Thus $\mathbb{E}_\mu(\mathbbm{1}_{[k]}|\sigma^{-1}\pi^{-1}\gamma)=\mu([k])=p_k$. So by using \eqref{084} and \eqref{085} we can conclude that $H_\mu(\mathcal{P}|\sigma^{-1}\pi^{-1}\gamma)=-\displaystyle\sum_{k=1}^mp_k\log(p_k)$.
\end{proof}

It is well known that the $\sigma$-algebra $\gamma$ is generated by countable many finite partitions, that is, let for $n=1, 2, \ldots$
\begin{align}
\mathcal{Q}_n=\left\lbrace \left[\frac{k}{2^n}, \frac{k+1}{2^n}   \right)    : 0\leq k\leq 2^n-1\right\rbrace,
\end{align}
then $\gamma=\displaystyle\bigvee_{i=1}^\infty\sigma(\mathcal{Q}_i)$ is the Borel $\sigma$-algebra on $[0,1]$.

\begin{lemma}\label{lem:int}
	Let $\mathcal{S}=\{ S_1, \dots, S_m\}$ be an IFS on $[0, 1]$. If $\mu$ is a Bernoulli measure on $\Sigma$ for the probability vector $\vect{p}=(p_1, \dots, p_m)$, then
	\begin{align}
	H_\mu(\mathcal{P}|\pi^{-1}\gamma)=-\int\displaylimits_\Sigma \log(\mu_\ii ([i_1]))\der \mu(\ii),
	\end{align}
	where
	\begin{align}\label{eq:limprop}
	\mu_\ii([i_1])=\lim_{n\to \infty} \frac{\mu(\pi^{-1}(\mathcal{Q}_n(\pi(\ii)))\cap [i_1])}{\mu(   \pi^{-1}(\mathcal{Q}_n(\pi(\ii)))  )}\text{ for $\mu$-a.e. $\ii$}.
	\end{align}
\end{lemma}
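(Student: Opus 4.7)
The plan is to unfold the definitions of conditional entropy and conditional information and then identify the conditional expectation $\mathbb{E}_\mu(\mathbbm{1}_{[i_1]}\mid\pi^{-1}\gamma)$ with the value at $[i_1]$ of the disintegration of $\mu$ over $\pi^{-1}\gamma$. Since $\mathcal{P}=\{[1],\ldots,[m]\}$ is a finite partition of $\Sigma$, for each $\ii\in\Sigma$ the sum in \eqref{085} collapses to the single term $E=[i_1]$; combined with \eqref{084} this yields
\begin{align*}
H_\mu(\mathcal{P}|\pi^{-1}\gamma) = -\int_\Sigma \log\bigl[\mathbb{E}_\mu(\mathbbm{1}_{[i_1]}\mid\pi^{-1}\gamma)(\ii)\bigr]\,\der\mu(\ii).
\end{align*}
It thus suffices to show that this conditional expectation equals the limit in \eqref{eq:limprop} for $\mu$-a.e. $\ii$.

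To get hold of the disintegration I would apply Theorem~\ref{078} with $\mathcal{F}=\pi^{-1}\gamma$: since $\gamma=\bigvee_{n\geq1}\sigma(\mathcal{Q}_n)$, one has $\pi^{-1}\gamma=\bigvee_{n\geq1}\sigma(\pi^{-1}\mathcal{Q}_n)$, and $\{\pi^{-1}\mathcal{Q}_n\}$ is a refining sequence of countable Borel partitions of $\Sigma$ with $\pi^{-1}\mathcal{Q}_n(\ii)=\pi^{-1}(\mathcal{Q}_n(\pi(\ii)))$. Theorem~\ref{078} then produces a system of conditional measures $\{\mu_\ii\}$ of $\mu$ as the weak-star limit of the normalised restrictions $\mu|_{\pi^{-1}(\mathcal{Q}_n(\pi(\ii)))}/\mu(\pi^{-1}(\mathcal{Q}_n(\pi(\ii))))$, and Theorem~\ref{077} applied with $\varphi=\mathbbm{1}_{[i_1]}$ gives $\mathbb{E}_\mu(\mathbbm{1}_{[i_1]}\mid\pi^{-1}\gamma)(\ii)=\mu_\ii([i_1])$ for $\mu$-a.e. $\ii$. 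Since cylinders are clopen in $\Sigma$ and therefore have empty topological boundary, the portmanteau theorem converts the weak-star convergence of the normalised restrictions into numerical convergence of their values at $[i_1]$, which is precisely the formula \eqref{eq:limprop} for $\mu_\ii([i_1])$.

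Equivalently, one could bypass the weak-star statement altogether by observing that $M_n(\ii):=\mu([i_1]\cap\pi^{-1}\mathcal{Q}_n(\ii))/\mu(\pi^{-1}\mathcal{Q}_n(\ii))=\mathbb{E}_\mu(\mathbbm{1}_{[i_1]}\mid\sigma(\pi^{-1}\mathcal{Q}_n))(\ii)$ and invoking the martingale convergence theorem along the filtration $\sigma(\pi^{-1}\mathcal{Q}_n)\nearrow\pi^{-1}\gamma$, which yields $M_n\to\mathbb{E}_\mu(\mathbbm{1}_{[i_1]}\mid\pi^{-1}\gamma)$ $\mu$-a.s. There is no essential obstacle; the only mild point is that $\log\mu_\ii([i_1])$ be $\mu$-a.s. finite, i.e.\ that $\mathbb{E}_\mu(\mathbbm{1}_{[i_1]}\mid\pi^{-1}\gamma)>0$ on $[i_1]$ $\mu$-a.s. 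This follows from the standard identity $\mu\bigl([i_1]\cap\{\mathbb{E}_\mu(\mathbbm{1}_{[i_1]}\mid\pi^{-1}\gamma)=0\}\bigr)=0$, obtained by integrating the indicator of the $\pi^{-1}\gamma$-measurable set $\{\mathbb{E}_\mu(\mathbbm{1}_{[i_1]}\mid\pi^{-1}\gamma)=0\}$ against $\mathbbm{1}_{[i_1]}$ and using the defining property of the conditional expectation.
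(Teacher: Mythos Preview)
Your argument is correct and follows essentially the same route as the paper: both identify $\mathbb{E}_\mu(\mathbbm{1}_{[k]}\mid\pi^{-1}\gamma)$ with $\mu_\ii([k])$ via Theorems~\ref{078} and~\ref{077}, then use that cylinders are clopen to pass from weak-star convergence of the normalised restrictions to convergence of their values on $[i_1]$. Your write-up is in fact more careful than the paper's (you make explicit the portmanteau step, offer the martingale alternative, and verify that $\log\mu_\ii([i_1])$ is $\mu$-a.s.\ finite), but the underlying proof is the same.
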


\begin{proof}
	Using Theorem \ref{076} and Theorem \ref{078}, we get that a system of conditional measures $\{ \mu_\ii  \}_{\ii\in \Sigma}$ with respect to the $\sigma$-algebra $\pi^{-1}\gamma$ exists and unique up to a set of zero measure. Then by using Theorem \ref{077},
	\begin{align}
	\mathbb{E}_\mu (\mathbbm{1}_{[k]}|\pi^{-1}\gamma)(\ii)=\int\displaylimits_\Sigma \kar{[k]}{\jj}\der \mu_\ii (\jj)=\mu_\ii([k]).
	\end{align}
	Thus, by the definition of the conditional entropy \eqref{085} and \eqref{084}, we get
\begin{align}
\begin{split}
H_\mu(\mathcal{P}|\pi^{-1}\gamma)&=-\sum_{k=1}^m\int\displaylimits_\Sigma \mathbbm{1}_{[k]}(\ii)\log\mu_\ii([k])\der\mu(\ii)\\
&=-\int\displaylimits_\Sigma \log(\mu_\ii ([i_1]))\der \mu(\ii).
\end{split}
\end{align}
By Theorem \ref{078}, the weak-star limit of the sequence
$\frac{\rest{\mu}{\pi^{-1}(\mathcal{Q}_n(\pi(\ii)))}}{\mu(   \pi^{-1}(\mathcal{Q}_n(\pi(\ii)))  )}$
exists for $\mu$-a.e. $\ii$ and equals to $\mu_\ii$. Since $[k]\subseteq \Sigma$ is open and closed,
$$
\mu_\ii([i_1])=\lim_{n\to \infty} \frac{\mu(\pi^{-1}(\mathcal{Q}_n(\pi(\ii)))\cap [i_1])}{\mu(   \pi^{-1}(\mathcal{Q}_n(\pi(\ii)))  )}.
$$\end{proof}

\subsection{Entropy under forward separation}

Now, we calculate the integral in Lemma~\ref{lem:int} under forward separation.

\begin{proposition}\label{prop:int}
Let $\mathcal{S}_\ind=\{S_1, S_2, S_3   \}$ be a forward separated system. (See Definition \ref{080}.) Let $\mu=(p_1, p_2, p_3)^{\mathbb{N}^+}$ be a Bernoulli measure on $\Sigma$ for the probability vector $\vect{p}=(p_1, p_2, p_3)$, and let $\{\mu_{\ii}\}_{\ii \in \Sigma}$ be the family of conditional measures of $\mu$ with respect to $\pi_\ind^{-1}\gamma$. Then
$$
\int\displaylimits_\Sigma\log\mu_\ii([i_1])d\mu(\ii)=\sum_{k=1}^\infty  \sum_{m=1}^{k}\binom{k-1}{m-1}p_3\log\left(\frac{m}{k}\right)(p_1^mp_2^{k-m}+p_1^{k-m}p_2^m).
$$
\end{proposition}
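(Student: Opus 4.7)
The strategy is to turn forward separation into an explicit combinatorial formula for $\mu_\ii([i_1])$ and then sum the resulting series.

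By Lemma~\ref{099} we have the disjoint decomposition $\Lambda_\ind\setminus\{0\}=\bigsqcup_{m,n\ge 0}S_1^m S_2^n(R_\ind)$. Since $p_3>0$, the event $\pi_\ind(\ii)=0$ (i.e.\ $\ii$ contains no $3$) has $\mu$-measure zero, so for $\mu$-a.e.\ $\ii$ there is a unique pair $(m,n)=(m(\ii),n(\ii))$ with $\pi_\ind(\ii)\in S_1^m S_2^n(R_\ind)$; write $\pi_\ind(\ii)=T(y)$ with $T:=S_1^m S_2^n S_3$ and a unique $y\in\Lambda_\ind$. I claim that every preimage of $\pi_\ind(\ii)$ under $\pi_\ind$ has the form $\tau*3*\vv$, where $\tau\in\{1,2\}^{m+n}$ contains exactly $m$ ones and $n$ twos and $\vv\in\pi_\ind^{-1}(y)$. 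Indeed, the first symbol of such a preimage cannot equal $3$ unless $(m,n)=(0,0)$ (by disjointness of the pieces), and a first symbol $1$ (resp.\ $2$) forces $m\ge 1$ (resp.\ $n\ge 1$), in which case the shifted sequence lies in $S_1^{m-1}S_2^n(R_\ind)$ (resp.\ $S_1^m S_2^{n-1}(R_\ind)$) thanks to the commutativity $S_1 S_2=S_2 S_1$; induction gives the claim. In particular, there are $\binom{m+n}{m}$ admissible $\tau$'s, of which $\binom{m+n-1}{m-1}$ start with $1$ and $\binom{m+n-1}{m}$ start with $2$.

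To evaluate the limit \eqref{eq:limprop}, I use that the pieces $S_1^{m'}S_2^{n'}(R_\ind)$ are pairwise disjoint and accumulate only at $0$. Hence for $N$ large enough, $\mathcal{Q}_N(\pi_\ind(\ii))\cap\Lambda_\ind$ lies entirely inside $T(\Lambda_\ind)$, say $\mathcal{Q}_N(\pi_\ind(\ii))\cap\Lambda_\ind=T(B_N)$ with $B_N\subseteq\Lambda_\ind$ shrinking to $\{y\}$. Using the Bernoulli property of $\mu$ together with the preimage description above,
$$\mu\bigl(\pi_\ind^{-1}(\mathcal{Q}_N(\pi_\ind(\ii)))\bigr)=\binom{m+n}{m}p_1^m p_2^n p_3\,(\pi_*\mu)(B_N),$$
and the analogue restricted to the first-letter cylinder $[i_1]$ replaces the binomial by the count of admissible $\tau$'s starting with $i_1$. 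The common factor $(\pi_*\mu)(B_N)$ cancels and the ratio is already independent of $N$ for $N$ large, giving
$$\mu_\ii([i_1])=\begin{cases}m/(m+n)&\text{if }i_1=1,\\ n/(m+n)&\text{if }i_1=2,\\ 1&\text{if }i_1=3,\end{cases}$$
where the last line uses that $i_1=3$ forces $(m,n)=(0,0)$.

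Finally I integrate. The $i_1=3$ stratum contributes $0$ since $\log 1 = 0$. The $\mu$-measure of $\{\ii:\pi_\ind(\ii)\in S_1^m S_2^n(R_\ind),\ i_1=1\}$ equals $\binom{m+n-1}{m-1}p_1^m p_2^n p_3$, obtained by summing $p_\tau p_3=p_1^m p_2^n p_3$ over the $\binom{m+n-1}{m-1}$ admissible $\tau$'s starting with $1$; similarly for $i_1=2$. Hence
$$\int_\Sigma\log\mu_\ii([i_1])\,d\mu(\ii)=\sum_{\substack{m\ge1\\ n\ge0}}\binom{m+n-1}{m-1}p_1^m p_2^n p_3\log\tfrac{m}{m+n}+\sum_{\substack{m\ge0\\ n\ge1}}\binom{m+n-1}{m}p_1^m p_2^n p_3\log\tfrac{n}{m+n}.$$
Substituting $k=m+n$ in both sums, renaming the inner index in the second, and using $\binom{k-1}{k-m}=\binom{k-1}{m-1}$, the two sums merge into the claimed formula. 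The main obstacle is the geometric step in the second paragraph: verifying that the pieces $S_1^{m'}S_2^{n'}(R_\ind)$ lie at positive distance from any fixed $\pi_\ind(\ii)\neq 0$ so that $\mathcal{Q}_N(\pi_\ind(\ii))\cap\Lambda_\ind\subseteq T(\Lambda_\ind)$ for $N$ large; once that is secured, Bernoulli factorisation and a clean combinatorial rearrangement finish the proof.
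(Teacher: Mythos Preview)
Your proof is correct and follows essentially the same route as the paper's: both arguments use the disjoint decomposition of Lemma~\ref{099} to localise $\pi_\ind(\ii)$ in a single piece $S_1^mS_2^n(R_\ind)$, argue that for small enough dyadic intervals $\mathcal{Q}_N(\pi_\ind(\ii))$ only this piece is seen, exploit the Bernoulli product structure so that the tail factor cancels in the ratio \eqref{eq:limprop}, and finish with the same binomial bookkeeping. Your presentation is slightly more explicit in describing the full fibre $\pi_\ind^{-1}(x)$ as $\{\tau*3*\vv\}$ before passing to the dyadic ratio, whereas the paper instead introduces the sets $H(m,n)$ and writes $\pi_\ind^{-1}(\mathcal{Q}_\ell(\pi_\ind(\ii)))=E(m,n)\times T_\ell$ directly; these are two phrasings of the same factorisation.
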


\begin{proof}
We use simpler notation for the mathematical objects which belongs to the IFS $\mathcal{S}_\ind$.
We denote the attractor of $\mathcal{S}_\ind$ with $\Lambda$. The natural projection is $\pi$.

	We define for all $m, n=0, 1, \dots$ the set
	$$
		E(m,n)=\{  (i_1, \dots , i_{m+n+1})\, :\, i_{m+n+1}=3, \abs{ \{ k: i_k=1  \} }=m, \, \, \abs{ \{ k:  i_k=2  \}  }=n    \}
	$$
	and
	\begin{align}
	H(m, n)=\left\lbrace  \ii=(i_1, i_2, \dots )\in \Sigma\,  : \ii|_{n+m+1}\in E(m,n)\right\rbrace .
	\end{align}
	It is easy to see that $\pi(H(m, n))=S_1^mS_2^nS_3(\Lambda)\subseteq [a(m, n), 1]$, where $a(m, n)=(1-\gamma)(\min\{ \alpha, \beta  \})^{m+n}$.
	
	Let $\ii\in H(m,n)$ be arbitrary but fixed. Then $\pi(\ii)\neq0$ thus, there exists $N\geq1$ such that $0\notin\mathcal{Q}_\ell(\pi(\ii))$ for every $\ell\geq N$. So if $p, q\in \mathbb{N}$ are such that $(\max\{\alpha, \beta  \})^{p+q}<2^{-(N+1)}$, then $S_1^pS_2^qS_3(\Lambda)\cap Q_\ell(\pi(\ii))=\emptyset$ for every $\ell\geq N$.
	
	Let
	\begin{align}
	\mathcal{H}=\left\lbrace   S_1^kS_2^lS_3(\Lambda)\, :\, (\max\{\alpha, \beta  \})^{k+l}\geq2^{-(N+1)}, \quad k,l= 0, 1,\dots   \right\rbrace .
	\end{align}
	Note that $S_1^mS_2^nS_3(\Lambda)\in \mathcal{H}$. By Lemma~\ref{099}, the set $\mathcal{H}$ is a finite collection of disjoint compact sets, thus there exists $\varepsilon_1>0$ such that
	\begin{align}
	\text{for every } H_1\neq H_2 \in \mathcal{H}\quad B_{\varepsilon_1}(H_1)\cap B_{\varepsilon_1}(H_2)=\emptyset,
	\end{align}
	where $B_\varepsilon(H)$ means the $\varepsilon$ neighbourhood of the set $H$.

Thus, by choosing $N'=\max\{N,-\log\frac{\varepsilon_1}{2}\}$, $Q_\ell(\pi(\ii))\cap \Lambda\subseteq S_1^m S_2^n (S_3(\Lambda))$ for every $\ell\geq N'$, and hence $\pi^{-1}(\mathcal{Q}_\ell(\pi(\ii))\subseteq H(m, n)$. More precisely,
	\begin{align*}
	\pi^{-1}(\mathcal{Q}_\ell(\pi(\ii)))&=E(m,n)\times T_\ell(i_{m+n+2}, i_{m+n+3}, \ldots )=E(m,n)\times T_\ell.
	\end{align*}
	Indeed, if $\jj\in\sigma^{n+m+1}\pi^{-1}(\mathcal{Q}_\ell(\pi(\ii)))$, then for any $\mathbf{k}\in E(m,n)$, $\mathbf{k}*\jj\in\pi^{-1}(\mathcal{Q}_\ell(\pi(\ii)))$. Since $\mu$ is a Bernoulli measure
	\begin{align}
	\frac{\mu(E(m,n)\times T_\ell\cap [i_1])}{\mu(E(m,n)\times T_\ell)}=\frac{\mu(H(m,n)\cap [i_1] )\mu(T_\ell)}{\mu(H(m,n))\mu(T_\ell)}=\frac{\mu(H(m,n)\cap [i_1] ) }{\mu(H(m,n))}.
	\end{align}
	On the other hand,
	\begin{align}
	\mu(H(m,n))&=\frac{(m+n)!}{m!n!}p_1^m p_2^n p_3,\\
	\mu(H(m,n)\cap [1])&=\frac{(m+n-1)!}{(m-1)!n!}p_1^m p_2^n p_3\text{ and }\\
	\mu(H(m,n)\cap [2])&=\frac{(m+n-1)!}{m!(n-1)!}p_1^m p_2^n p_3.
	\end{align}
	Thus, for every $\ii\in H(m,n)$ and every sufficiently large $\ell$
	\begin{align}
	\frac{\mu(\pi^{-1}(\mathcal{Q}_\ell(\pi(\ii)))\cap[i_1])}{\mu(\pi^{-1}(\mathcal{Q}_\ell(\pi(\ii))))}=\begin{cases}
	\frac{m}{m+n} & i_1=1,\\
	\frac{n}{m+n} & i_1=2.
	\end{cases}
	\end{align}
	
	If $\ii\in H(0,0)=[3]$ then for large enough $\ell$ we get $\pi^{-1}(\mathcal{Q}_\ell(\pi(\ii)))\subseteq[3]$, and thus
	\begin{align}
	\mu_\ii([i_1])=\lim_{\ell\to\infty}\frac{\mu(\pi^{-1}(\mathcal{Q}_\ell(\pi(\ii)))\cap[i_1])}{\mu(\pi^{-1}(\mathcal{Q}_\ell(\pi(\ii))))}=1.
	\end{align}
	Since $\mu(\bigcup_{m, n=0}^\infty H(m, n))=1$, the integral that we want to calculate is
$$
	\int\displaylimits_\Sigma  \log (\mu_\ii ([i_1]))\der \mu(\ii)=\sum_{k=1}^\infty  \sum_{m=1}^{k}\binom{k-1}{m-1}p_3\log\left(\frac{m}{k}\right)(p_1^mp_2^{k-m}+p_1^{k-m}p_2^m).
	$$
\end{proof}

Summarizing the above.

\begin{theorem}\label{thm:dimforsep}
Let $\mathcal{S}_\ind=\{S_1, S_2, S_3   \}$ be a forward separated system. (See Definition \ref{080}.) Let $\mu=(p_1, p_2, p_3)^{\mathbb{N}^+}$ be a Bernoulli measure on $\Sigma$ for the probability vector $\vect{p}=(p_1, p_2, p_3)$, and let $\nu_\ind={\pi_\ind}_*\mu$. Then
$$
\dim_H\nu_\ind =\dfrac{-\sum_{i=1}^3p_i\log p_i+\sum_{k=1}^\infty  \sum_{m=1}^{k}\binom{k-1}{m-1}p_3\log\left(\frac{m}{k}\right)(p_1^mp_2^{k-m}+p_1^{k-m}p_2^m)}{-p_1\log\alpha-p_2\log\beta-p_3\log\gamma}.
$$
\end{theorem}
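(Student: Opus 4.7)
The plan is a direct synthesis of Feng--Hu's exact-dimensionality formula (Theorem~\ref{086}) with the entropy computations from Lemmas~\ref{083} and~\ref{lem:int} and Proposition~\ref{prop:int}. Since $\mu=(p_1,p_2,p_3)^{\N^+}$ is a Bernoulli measure it is in particular $\sigma$-invariant and ergodic, so Theorem~\ref{086} applies and yields
$$d_{\nu_\ind}(x)=\frac{h_\pi(\sigma,\mu)}{-(p_1\log\alpha+p_2\log\beta+p_3\log\gamma)}\quad\text{for $\nu_\ind$-a.e.\ }x.$$
Because the right-hand side is a constant, the measure $\nu_\ind$ is exact dimensional and $\dim_H\nu_\ind$ equals this common value. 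The denominator already matches the one in the statement, so the remaining task is to identify $h_\pi(\sigma,\mu)$ with the numerator.

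Next I would expand the projection entropy according to its definition,
$$h_\pi(\sigma,\mu)=H_\mu(\mathcal{P}|\sigma^{-1}\pi^{-1}\gamma)-H_\mu(\mathcal{P}|\pi^{-1}\gamma),$$
and evaluate the two summands separately. Lemma~\ref{083}, which uses only that $\mu$ is Bernoulli (in particular that $\sigma(\mathcal{P})$ is independent of $\sigma^{-1}\pi^{-1}\gamma$), immediately gives $H_\mu(\mathcal{P}|\sigma^{-1}\pi^{-1}\gamma)=-\sum_{i=1}^{3}p_i\log p_i$. For the second summand Lemma~\ref{lem:int} rewrites it as $-\int_\Sigma \log\mu_\ii([i_1])\,d\mu(\ii)$, and Proposition~\ref{prop:int} then evaluates this integral as $\Phi(p_1,p_2,p_3)$; the forward-separation hypothesis enters only here, via the identification of $\pi^{-1}(\mathcal{Q}_\ell(\pi(\ii)))$ with a product $E(m,n)\times T_\ell$ at fine scales. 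Hence $H_\mu(\mathcal{P}|\pi^{-1}\gamma)=-\Phi(p_1,p_2,p_3)$.

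Subtracting gives $h_\pi(\sigma,\mu)=-\sum_i p_i\log p_i+\Phi(p_1,p_2,p_3)$, which is exactly the numerator in the statement; plugging into Feng--Hu's formula then finishes the proof. The genuine work has already been carried out in Proposition~\ref{prop:int}, so at the level of the present theorem the only subtleties are sign bookkeeping (the factors $\log(m/k)$ make $\Phi$ non-positive, and the explicit minus sign from the definition of $H_\mu$ flips to the plus sign in front of $\Phi$) and the observation that the transition from an a.e.\ constant local dimension to the value of $\dim_H\nu_\ind$ is automatic once exact dimensionality is in hand. No further obstacle arises.
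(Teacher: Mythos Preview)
Your proposal is correct and follows precisely the paper's own argument: the paper's proof is the single sentence ``The statement follows by the combination of Theorem~\ref{086}, Lemma~\ref{083}, Lemma~\ref{lem:int} and Proposition~\ref{prop:int},'' and you have simply made that combination explicit, including the sign bookkeeping and the passage from a.e.\ constant local dimension to $\dim_H\nu_\ind$.
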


\begin{proof}
	The statement follows by the combination of Theorem~\ref{086}, Lemma~\ref{083}, Lemma~\ref{lem:int} and Proposition~\ref{prop:int}.
\end{proof}

\begin{remark}\label{remark}
	An alternative proof of Theorem~\ref{thm:dimforsep} would be an application the results of Mihailescu and Urba\'nski \cite{MihUrb}. Namely, one can show that if the IFS $\mathcal{S}_{\alpha,\beta,\gamma}=\{S_1,S_2,S_3\}$ in \eqref{def:system} is forward separated then the self-similar measure $\nu'$ of the infinite IFS $\mathcal{S}'=\{S_{\ii}\}_{\ii\in\mathcal{C}}$, where
	$\mathcal{C}=\{(\overbrace{1\cdots1}^{k}\overbrace{2\cdots2}^\ell3):0\leq k,\ell\}$	with probability vector $\underline{p}'=\left(\binom{k+\ell}{k}p_1^kp_2^\ell p_3\right)_{0\leq k,\ell}$ is equivalent to the self-similar measure $\nu$ of the IFS $\mathcal{S}_\ind$ with probabilities $\underline{p}=(p_1,p_2,p_3)$. Then the  Theorem~\ref{thm:dimforsep} follows by simple algebraic manipulations and by \cite[Theorem~2.5(b), Theorem~3.11]{MihUrb} on infinite IFS.
	\end{remark}

\section{Existence of forward separated systems}\label{sec:exist}

In this section, we follow the argument of Kamalutdinov and Tetenov \cite{Tetenov}. The proof of the following theorem can be found in \cite[Theorem~14]{Tetenov}.

\begin{theorem}[General Position Theorem]\label{097} Let $(D, d_D), (L_1, d_{L_1}), (L_2, d_{L_2})$ be compact metric spaces and let $\varphi_i(\xi, x): D\times L_i \rightarrow \R^n$ for $ i\in \{ 1, 2  \}$ be continuous functions.  Suppose that

	\begin{enumerate}
		\item $\text{there exists } \alpha>0\text{ and $C>0$ such that for all } i\in \{ 1, 2 \}, \xi \in D\text{ and for all } x, y \in~L_i$
		$$
		\norm{\varphi_i(\xi, x)- \varphi_i(\xi, y)}\leq C d_{L_i}(x, y)^\alpha,
		$$
		where $\norm{\cdot}$ is the Euclidean norm in $\R^n$.
		\item Let $\Phi: D\times L_1\times L_2\rightarrow \R^n \quad \Phi(\xi, x_1, x_2)=\varphi_1(\xi, x_1)-\varphi_2(\xi, x_2)$ such that
		\begin{align}
		\begin{split}
		\text{ there exist } M>0 \text{ for all } \xi , \xi ' \in D &\text{ for all } x_1 \in L_1\text{ for all } x_2 \in L_2\\
		\norm{\Phi(\xi, x_1, x_2)-\Phi(\xi', x_1, x_2)}&\geq M d_D(\xi, \xi').
		\end{split}
		\end{align}
	\end{enumerate}
	
	Then the set $\Delta= \{\xi \in D : \varphi_1(\xi, L_1)\cap \varphi_2(\xi, L_2)\neq \emptyset     \}$ is compact in D and
	\begin{align}
	\hdim (\Delta)\leq  \frac{\hdim(L_1 \times L_2 )  }{\alpha} .
	\end{align}
\end{theorem}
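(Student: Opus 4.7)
The plan is to prove the General Position Theorem via a covering argument that marries the H\"older hypothesis (1) with the reverse Lipschitz (transversality) hypothesis (2). Compactness of $\Delta$ comes first and is easy: since $\Delta \subseteq D$ and $D$ is compact, I only need to show $\Delta$ is closed. Take $\xi_n \to \xi$ in $D$ with $\xi_n \in \Delta$, and pick witnesses $x_i^n \in L_i$ satisfying $\varphi_1(\xi_n, x_1^n) = \varphi_2(\xi_n, x_2^n)$. By compactness of each $L_i$, pass to a subsequence with $x_i^n \to x_i$; joint continuity of $\varphi_1,\varphi_2$ then delivers $\varphi_1(\xi, x_1) = \varphi_2(\xi, x_2)$, so $\xi \in \Delta$.

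For the dimension bound, I would fix any $s > \hdim(L_1 \times L_2)$, so that $\mathcal{H}^s(L_1 \times L_2) = 0$. Given any $\eta > 0$ and $\delta_0 > 0$ I can therefore choose a countable cover $\{U_j\}$ of $L_1 \times L_2$ with $\mathrm{diam}(U_j) < \delta_0$ and $\sum_j \mathrm{diam}(U_j)^s < \eta$. Set $\delta_j = \mathrm{diam}(U_j)$ and pick an arbitrary anchor $(x_1^j, x_2^j) \in U_j$. Define
\[
\Delta_j = \{\xi \in \Delta : \exists\, (x_1, x_2) \in U_j \text{ with } \Phi(\xi, x_1, x_2) = 0\},
\]
so that $\Delta = \bigcup_j \Delta_j$.

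The heart of the argument is a diameter bound for each $\Delta_j$. If $\xi, \xi' \in \Delta_j$ are realized by witnesses $(x_1, x_2), (x_1', x_2') \in U_j$, then hypothesis (1) yields
\[
\norm{\Phi(\xi, x_1^j, x_2^j)} = \norm{\Phi(\xi, x_1^j, x_2^j) - \Phi(\xi, x_1, x_2)} \leq 2C\delta_j^\alpha,
\]
and similarly $\norm{\Phi(\xi', x_1^j, x_2^j)} \leq 2C\delta_j^\alpha$. Applying hypothesis (2) at the fixed anchor then gives
\[
M\, d_D(\xi, \xi') \leq \norm{\Phi(\xi, x_1^j, x_2^j) - \Phi(\xi', x_1^j, x_2^j)} \leq 4C\delta_j^\alpha,
\]
so $\mathrm{diam}(\Delta_j) \leq (4C/M)\delta_j^\alpha$. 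Summing,
\[
\sum_j \mathrm{diam}(\Delta_j)^{s/\alpha} \leq (4C/M)^{s/\alpha} \sum_j \delta_j^s < (4C/M)^{s/\alpha}\eta,
\]
and since $\eta$ and $\delta_0$ are arbitrary, $\mathcal{H}^{s/\alpha}(\Delta) = 0$. Thus $\hdim(\Delta) \leq s/\alpha$; letting $s \searrow \hdim(L_1 \times L_2)$ finishes the proof.

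I do not expect a genuine obstacle here, since the two hypotheses are tailor-made for this covering argument. The one mild subtlety worth flagging is that the anchor $(x_1^j, x_2^j) \in U_j$ must be chosen independently of the witness realizing each $\xi \in \Delta_j$: this uniformity is precisely what allows the transversality estimate (2) to control $d_D(\xi, \xi')$ for \emph{all} pairs in $\Delta_j$ simultaneously, yielding a bound on the whole diameter of $\Delta_j$ rather than only distances to one particular realizer.
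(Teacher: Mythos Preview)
Your proof is correct. Note that the paper does not actually supply its own proof of this theorem: immediately before the statement it writes ``The proof of the following theorem can be found in \cite[Theorem~14]{Tetenov},'' so there is nothing in the paper to compare against beyond the citation. The covering argument you give---anchor each cover element of $L_1\times L_2$, use the H\"older hypothesis to bound $\norm{\Phi(\xi,\cdot)}$ at the anchor, then use the transversality hypothesis at the fixed anchor to bound $\mathrm{diam}(\Delta_j)$---is the natural and standard way to prove such a statement, and is in the spirit of the cited reference. The one implicit point you might make explicit is that the product metric on $L_1\times L_2$ is taken so that projection to each factor is $1$-Lipschitz (e.g.\ the max or sum metric), which is what justifies $d_{L_i}(x_i^j,x_i)\le \delta_j$ in your H\"older step; this is routine and does not affect the Hausdorff dimension.
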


\begin{lemma}[Displacement lemma]\label{094}
	Let $\mathcal{S}=\{S_1, \dots , S_m    \}$ and $\tilde{\mathcal{S}}=\{\tilde{S}_1, \dots  , \tilde{S}_m  \}$ be two iterated function systems on $\R^n$. We denote the natural projection of $\mathcal{S}$ with $\pi: \Sigma \rightarrow \R^n$ and the natural projection of $\tilde{ \mathcal{S}}$ with $\tilde{\pi}: \Sigma \rightarrow \R^n$, where $\Sigma=\{ 1, \dots, m   \}^{\mathbb{N}^+}$ is the symbolic space. Let $V\subseteq \R^n$ be a compact set such that for every $ i \in \{ 1, \dots , m\}, \, \,  S_i(V)\subseteq V$ and $\tilde{S}_i(V)\subseteq V$. Then
	\begin{align}
	\begin{split}
	\text{for all } \ii=(i_1, i_2, \dots)\in \Sigma  \quad &\norm{\pi(\ii) -\tilde{\pi}(\ii)}\leq \frac{\delta}{1-p},
	\end{split}
	\end{align}
	where
	\begin{align}
	\begin{split}
	\delta=\max \{ \norm{S_i(x)-\tilde{S}_i(x)}&: i\in \{ 1, \dots, m\} , \quad x\in V    \}\text{ and } \\
	p=\max_{1\leq i\leq m }\{   \max \{& \mathrm{Lip}(S_i) , \mathrm{Lip}(\tilde{S}_i) \}\}.
	\end{split}
	\end{align}
\end{lemma}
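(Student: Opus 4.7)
My plan is to represent both projections as limits of finite compositions applied to a common base point in $V$, then interpolate between the two via a telescoping chain that changes one map at a time; the geometric decay of the iterated Lipschitz constants will then give a bound independent of the length of the truncation. Concretely, fix an arbitrary $x_0 \in V$ and set
$$
\pi_n(\ii) = S_{i_1}\circ\cdots\circ S_{i_n}(x_0), \qquad \tilde\pi_n(\ii) = \tilde S_{i_1}\circ\cdots\circ \tilde S_{i_n}(x_0).
$$
Since both IFSs are contracting with common invariant compact set $V$, the diameters of the $n$-fold images of $V$ shrink geometrically, so $\pi_n(\ii) \to \pi(\ii)$ and $\tilde\pi_n(\ii) \to \tilde\pi(\ii)$. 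It thus suffices to bound $\|\pi_n(\ii) - \tilde\pi_n(\ii)\|$ uniformly in $n$ and pass to the limit.

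For the telescope, for $k = 1, \ldots, n+1$ define the hybrid
$$
z_k = S_{i_1}\circ\cdots\circ S_{i_{k-1}}\circ \tilde S_{i_k}\circ\cdots\circ \tilde S_{i_n}(x_0),
$$
with the conventions that the $S$-block is empty for $k=1$ and the $\tilde S$-block is empty for $k = n+1$, so $z_1 = \tilde\pi_n(\ii)$ and $z_{n+1} = \pi_n(\ii)$. Write $y_k = \tilde S_{i_{k+1}}\circ\cdots\circ \tilde S_{i_n}(x_0)$ (with $y_n = x_0$); iterated invariance $\tilde S_i(V)\subseteq V$ gives $y_k \in V$, and
$$
z_{k+1} - z_k = S_{i_1}\circ\cdots\circ S_{i_{k-1}}\bigl(S_{i_k}(y_k)\bigr) - S_{i_1}\circ\cdots\circ S_{i_{k-1}}\bigl(\tilde S_{i_k}(y_k)\bigr).
$$
Since $S_{i_1}\circ\cdots\circ S_{i_{k-1}}$ has Lipschitz constant at most $p^{k-1}$, and since $y_k \in V$ lets us invoke the uniform bound $\|S_{i_k}(y_k) - \tilde S_{i_k}(y_k)\|\leq\delta$, each telescoping increment satisfies $\|z_{k+1}-z_k\| \leq p^{k-1}\delta$.

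Summing with the triangle inequality yields
$$
\|\pi_n(\ii) - \tilde\pi_n(\ii)\| \leq \sum_{k=1}^n p^{k-1}\delta \leq \frac{\delta}{1-p},
$$
uniformly in $n$, so letting $n\to\infty$ produces the claimed bound. The only conceptually delicate point I foresee is ensuring that each hybrid argument $y_k$ lies in $V$ so the uniform estimate $\delta$ is actually applicable; this is exactly what the joint invariance hypothesis $S_i(V), \tilde S_i(V) \subseteq V$ provides. Beyond that, the argument is a routine contraction-plus-telescope calculation and I do not anticipate any genuine obstacle.
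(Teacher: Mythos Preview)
Your telescoping argument is correct and is the standard proof of this displacement estimate. The paper does not supply its own proof of this lemma but simply refers to \cite[Theorem~15]{Tetenov}; what you have written is essentially the argument one finds there, so there is nothing further to compare.
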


The proof of Lemma~\ref{094} can be found in \cite[Theorem~15]{Tetenov}.

Let us equip the symbolic space $\Sigma=\{ 1, \ldots , m   \}^{\mathbb{N}^+}$ with the metric $\rho_a$ such that $\rho_a(\ii,\jj)=a^{s(\ii, \jj)}$, where $0<a<1$ and $s(\ii, \jj)=\inf\{k\geq  0 : i_{k+1}\neq j_{k+1} \}$. It is a well known fact that the metric space $(\Sigma, \rho_a)$ is compact, moreover
\begin{align}\label{096}
\hdim(\Sigma)=\frac{\log m}{-\log a}<\frac{1}{2}.
\end{align}

The symbolic space of $\mathcal{S}_\ind $ is $\Sigma= \{ 1, 2, 3   \}^{\mathbb{N}^+}$. By the above consideration $\dim_H(\Sigma)<\frac{1}{2}$ in the metric $\rho_a$ if and only if $0<a<\frac{1}{9}$.

In the rest of this section we investigate the family of the systems $\mathcal{S}_\ind$.

\begin{lemma}\label{092}
	Let $a\in \left( 0, \frac{1}{9}  \right) $ and $\alpha, \beta, \gamma<a$. Then the natural projection $\pi_\ind\colon\Sigma\rightarrow\R$ of the system $\mathcal{S}_\ind$ is 1-Lipschitz with respect to the metric $\rho_a$ and the usual Euclidean norm on $\R$.
\end{lemma}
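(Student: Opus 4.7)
The plan is to unwind $\pi_{\alpha,\beta,\gamma}(\ii)$ and $\pi_{\alpha,\beta,\gamma}(\jj)$ along their common prefix and bound the resulting contraction against $a^{s(\ii,\jj)}$. First I would observe that $[0,1]$ is invariant under each $S_i$: clearly $S_1([0,1]) = [0,\alpha] \subseteq [0,1]$, $S_2([0,1]) = [0,\beta] \subseteq [0,1]$, and $S_3([0,1]) = [1-\gamma,1] \subseteq [0,1]$, since $\alpha,\beta,\gamma < 1$. Consequently the attractor $\Lambda_{\alpha,\beta,\gamma}$ is contained in $[0,1]$, so $\mathrm{diam}(\Lambda_{\alpha,\beta,\gamma}) \leq 1$.

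Next, fix $\ii \neq \jj \in \Sigma$ and set $k = s(\ii,\jj)$, so that $\ii|_k = \jj|_k$. I would use the factorization
\begin{equation*}
\pi_{\alpha,\beta,\gamma}(\ii) = S_{\ii|_k}\bigl(\pi_{\alpha,\beta,\gamma}(\sigma^k \ii)\bigr), \qquad \pi_{\alpha,\beta,\gamma}(\jj) = S_{\ii|_k}\bigl(\pi_{\alpha,\beta,\gamma}(\sigma^k \jj)\bigr),
\end{equation*}
which, combined with the fact that $S_{\ii|_k}$ is a similarity of ratio $r_{\ii|_k} = \prod_{t=1}^{k} r_{i_t}$, yields
\begin{equation*}
\bigl|\pi_{\alpha,\beta,\gamma}(\ii) - \pi_{\alpha,\beta,\gamma}(\jj)\bigr| = r_{\ii|_k}\,\bigl|\pi_{\alpha,\beta,\gamma}(\sigma^k\ii) - \pi_{\alpha,\beta,\gamma}(\sigma^k\jj)\bigr|.
\end{equation*}
Since each contraction ratio $r_{i_t} \in \{\alpha,\beta,\gamma\}$ is strictly less than $a$, the product satisfies $r_{\ii|_k} \leq a^k$, and since both $\pi_{\alpha,\beta,\gamma}(\sigma^k\ii)$ and $\pi_{\alpha,\beta,\gamma}(\sigma^k\jj)$ lie in $\Lambda_{\alpha,\beta,\gamma} \subseteq [0,1]$, the second factor is bounded by $1$.

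Combining these bounds gives $|\pi_{\alpha,\beta,\gamma}(\ii) - \pi_{\alpha,\beta,\gamma}(\jj)| \leq a^k = a^{s(\ii,\jj)} = \rho_a(\ii,\jj)$, which is exactly the required 1-Lipschitz estimate. There is no real obstacle here; the only subtlety is ensuring the attractor has diameter at most $1$, which is automatic from the invariance of $[0,1]$ under the IFS. The hypothesis $a < 1/9$ plays no role at this step beyond guaranteeing consistency with the conditions imposed earlier; what is actually used is the weaker bound $\alpha,\beta,\gamma < a$.
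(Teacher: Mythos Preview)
Your proof is correct and follows essentially the same approach as the paper: both arguments observe that $\pi_{\alpha,\beta,\gamma}(\ii)$ and $\pi_{\alpha,\beta,\gamma}(\jj)$ lie in the cylinder $S_{\ii|_k}(\Lambda_{\alpha,\beta,\gamma})$, whose diameter is at most $a^k$ since each contraction ratio is below $a$ and $\mathrm{diam}(\Lambda_{\alpha,\beta,\gamma})\leq 1$. Your version is slightly more explicit about the invariance of $[0,1]$ and the factorization through $\sigma^k$, but the content is the same.
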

\begin{proof}
	Let $\ii, \jj\in \Sigma$ with $s(\ii, \jj)=k$. Then $\rho_a(\ii, \jj)=a^k$ and $i_1=j_1, \dots , i_k=j_k$, thus $\pi_\ind(\ii), \pi_\ind(\jj)\in S_{i_1\dots i_k}(\Lambda_\ind )$. The diameter of $S_{i_1\dots i_k}(\Lambda_\ind )$ is $\mathrm{Lip}(S_{i_1})\cdot \dots \cdot \mathrm{Lip}(S_{i_k})$, which is strictly smaller than $a^k$. So
$\abs{\pi_\ind(\ii)-\pi_\ind(\jj)}\leq a^k=\rho_a(\ii, \jj).$
\end{proof}

\begin{lemma}
	Let $m, n\in \mathbb{N}^+$ be arbitrary. Let $\alpha, \beta, \gamma\in \left(0, \frac{1}{9}\right)$ and consider the system $\mathcal{S}_\ind$. If $S_1^m(R_\ind)\cap S_2^n(R_\ind)\neq \emptyset$, then $\frac{8}{9}\leq \frac{\alpha^m}{\beta^n}\leq \frac{9}{8}$.
\end{lemma}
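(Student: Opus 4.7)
The plan is to show that $R_\ind$ lies in a short interval at the right end of $[0,1]$, so that $S_1^m(R_\ind)$ and $S_2^n(R_\ind)$ are each squeezed into narrow intervals whose sole intersection geometry forces the claimed ratio bound.

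First I would establish that $\Lambda_\ind\subseteq [0,1]$ with $0,1\in\Lambda_\ind$. This is standard: the interval $[0,1]$ is $\mathcal{S}_\ind$-invariant because $S_1([0,1])=[0,\alpha]$, $S_2([0,1])=[0,\beta]$ and $S_3([0,1])=[1-\gamma,1]$ are all contained in $[0,1]$; and $0$ is the fixed point of $S_1$ while $1$ is the fixed point of $S_3$. Consequently $R_\ind=S_3(\Lambda_\ind)\subseteq[1-\gamma,1]$. Since $\gamma<1/9$, we obtain $R_\ind\subseteq[8/9,1]$, and in fact both endpoints are attained: $1\in R_\ind$ and $1-\gamma\in R_\ind$.

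Next I would apply the maps $S_1^m(x)=\alpha^m x$ and $S_2^n(x)=\beta^n x$ to this containment. This gives
\[
S_1^m(R_\ind)\subseteq[\tfrac{8}{9}\alpha^m,\alpha^m],\qquad S_2^n(R_\ind)\subseteq[\tfrac{8}{9}\beta^n,\beta^n].
\]
If these two intervals have a common point, then the left endpoint of each must be no larger than the right endpoint of the other, i.e.\ $\tfrac{8}{9}\alpha^m\leq\beta^n$ and $\tfrac{8}{9}\beta^n\leq\alpha^m$. Rearranging yields exactly $\tfrac{8}{9}\leq\alpha^m/\beta^n\leq\tfrac{9}{8}$.

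There is no real obstacle here; the only thing one has to be careful about is ensuring $R_\ind$ really sits inside $[1-\gamma,1]$, which rests on the (easy) invariance argument for $[0,1]$. The hypothesis $\gamma<1/9$ is used only in the final step to convert $1-\gamma$ into the clean constant $8/9$.
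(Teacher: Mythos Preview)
Your proof is correct and follows essentially the same route as the paper: observe $R_\ind\subseteq[1-\gamma,1]\subseteq[8/9,1]$, deduce $S_1^m(R_\ind)\subseteq[\tfrac{8}{9}\alpha^m,\alpha^m]$ and $S_2^n(R_\ind)\subseteq[\tfrac{8}{9}\beta^n,\beta^n]$, and read off the ratio bound from the interval overlap condition. The only difference is that you spell out the invariance of $[0,1]$ explicitly, which the paper takes for granted.
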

\begin{proof}
	If $\alpha, \beta, \gamma\in \left( 0, \frac{1}{9} \right)$, then $R_\ind \subseteq [\frac{8}{9}, 1]$. Thus $S_1^m(R_\ind)\subseteq [\frac{8}{9}\alpha^m, \alpha^m]$ and $S_2^n(R_\ind )\subseteq [\frac{8}{9}\beta^n, \beta^n]$. The intersection can not happen if $\alpha^m< \frac{8}{9}\beta^n $ or $\beta^n< \frac{8}{9}\alpha^m$. Thus we do not have intersection if
	$
	\frac{\alpha^m}{\beta^n}<\frac{8}{9}\text{ or }\frac{\alpha^m}{\beta^n}>\frac{9}{8}
	$. So $\frac{8}{9}\leq \frac{\alpha^m}{\beta^n}\leq \frac{9}{8}$.
\end{proof}

\begin{lemma}\label{091}
	Let $m, n \in \mathbb{N}^+$ and $\beta, \gamma \in (0, \frac{1}{9})$ be fixed. Denote
	\begin{align}
	D_{m, n}(\beta, \gamma)=\left\lbrace \alpha\in \left(0, \frac{1}{9}\right): \frac{8}{9}\leq \frac{\alpha^m}{\beta^n}\leq \frac{9}{8}   \right\rbrace.
	\end{align}
	Let us define for $\alpha\in D_{m,n}(\beta,\gamma)$ and $\ii\in\Sigma$
	\begin{align*}
	\begin{split}
	 \varphi_1(\alpha, \ii)&=\pi_\ind ((1)^m*(3)*\ii)=S_1^mS_3(\pi_\ind(\ii)), \\
	\varphi_2(\alpha, \ii)&=\pi_\ind ((2)^n*(3)*\ii)=S_2^nS_3(\pi_\ind (\ii)).
	\end{split}
	\end{align*}
	Then for every $\alpha, \alpha'\in D_{m, n }(\beta, \gamma)$ and for every $\ii, \jj\in \Sigma$
	\begin{align}
	\abs{\varphi_1(\alpha, \ii)-\varphi_2(\alpha, \jj)-\varphi_1(\alpha', \ii)+\varphi_2(\alpha', \jj)}\geq M\abs{\alpha-\alpha'},
	\end{align}
	where $M(m, n, \beta, \gamma)>0$ constant.
\end{lemma}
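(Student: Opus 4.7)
The plan is to set $\Phi(\alpha) := \varphi_1(\alpha,\ii) - \varphi_2(\alpha,\jj)$ and show that $\Phi'(\alpha) \geq C\alpha^{m-1}$ uniformly for $\alpha$ on $D_{m,n}(\beta,\gamma)$, with $C$ an explicit positive constant, and then apply the mean value theorem. Writing $\pi_\alpha := \pi_\ind$ (with $\beta,\gamma$ fixed), we have $\varphi_1(\alpha,\ii) = \alpha^m\bigl((1-\gamma) + \gamma\pi_\alpha(\ii)\bigr)$ and $\varphi_2(\alpha,\jj) = \beta^n\bigl((1-\gamma) + \gamma\pi_\alpha(\jj)\bigr)$. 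Expanding $\pi_\alpha(\ii) = (1-\gamma)\sum_{k:\,i_k=3} \alpha^{a_k}\beta^{b_k}\gamma^{c_k}$, where $a_k, b_k, c_k$ are the number of $1,2,3$ occurrences in $(i_1,\ldots,i_{k-1})$, one sees that $\alpha\mapsto\pi_\alpha(\ii)$ is real-analytic on $(0,1/9)$ and that both $\pi_\alpha(\ii)$ and its termwise $\alpha$-derivative are sums of non-negative quantities.

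Differentiating yields
\begin{equation*}
\Phi'(\alpha) = m\alpha^{m-1}(1-\gamma) + m\gamma\alpha^{m-1}\pi_\alpha(\ii) + \gamma\alpha^m \pi'_\alpha(\ii) - \gamma\beta^n \pi'_\alpha(\jj),
\end{equation*}
of which the first three summands are non-negative. It remains to show that the last term is dominated by the first. Using $a_k\leq k-1$ and $\max\{\alpha,\beta,\gamma\}<1/9$, a geometric-series computation gives
\begin{equation*}
\pi'_\alpha(\jj) \leq \alpha^{-1}(1-\gamma)\sum_{k=1}^{\infty}(k-1)\left(\tfrac{1}{9}\right)^{k-1} = \frac{9(1-\gamma)}{64\,\alpha}.
\end{equation*}
Combined with the defining inequality $\beta^n \leq (9/8)\alpha^m$ on $D_{m,n}(\beta,\gamma)$, this yields $\gamma\beta^n\pi'_\alpha(\jj) \leq \tfrac{81\gamma(1-\gamma)}{512}\alpha^{m-1}$, and the bound $\gamma<1/9$ further gives $\leq\tfrac{9}{512}\alpha^{m-1}$. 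Since $m\alpha^{m-1}(1-\gamma) \geq \tfrac{8}{9}\alpha^{m-1}$, we conclude that $\Phi'(\alpha) \geq \bigl(\tfrac{8}{9}-\tfrac{9}{512}\bigr)\alpha^{m-1} =: C\alpha^{m-1}$ on $D_{m,n}(\beta,\gamma)$, with $C>0$.

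Finally, the mean value theorem produces $\tilde\alpha$ between $\alpha$ and $\alpha'$ with $|\Phi(\alpha)-\Phi(\alpha')| = \Phi'(\tilde\alpha)|\alpha-\alpha'| \geq C\tilde\alpha^{m-1}|\alpha-\alpha'|$, and since $\tilde\alpha \geq ((8/9)\beta^n)^{1/m}$ on $D_{m,n}(\beta,\gamma)$, one can take $M(m,n,\beta,\gamma) := C\bigl((8/9)\beta^n\bigr)^{(m-1)/m} > 0$. The main obstacle lies in controlling $\pi'_\alpha(\jj)$, whose naive bound grows like $1/\alpha$; this blow-up is exactly absorbed by the factor $\beta^n/\alpha \leq \tfrac{9}{8}\alpha^{m-1}$ enforced on $D_{m,n}$, and it is precisely the strong hypothesis $\alpha,\beta,\gamma<1/9$ that makes the resulting numerical constant strictly positive.
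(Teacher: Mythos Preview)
Your argument is correct, but it follows a genuinely different route from the paper. The paper works directly with finite differences: writing $\pi=\pi_{\alpha,\beta,\gamma}$, $\pi'=\pi_{\alpha',\beta,\gamma}$, it decomposes
\[
\Phi(\alpha)-\Phi(\alpha')=\underbrace{S_1^mS_3(\pi(\ii))-S_1^mS_3(\pi'(\ii))}_{A}+\underbrace{S_1^mS_3(\pi'(\ii))-S_1'^mS_3(\pi'(\ii))}_{B}+\underbrace{S_2^nS_3(\pi'(\jj))-S_2^nS_3(\pi(\jj))}_{C},
\]
bounds the error terms $|A|,|C|$ via the Displacement Lemma (which gives the uniform Lipschitz estimate $|\pi(\ii)-\pi'(\ii)|\le\tfrac{9}{8}|\alpha-\alpha'|$), and bounds the main term $|B|$ below by the mean value theorem for $\alpha\mapsto\alpha^m$; this yields the explicit constant $M=5\beta^n$. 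Your approach instead differentiates $\Phi$ after expanding $\pi_\alpha$ as a power series, bounds $\pi'_\alpha(\jj)$ term by term, and only invokes the mean value theorem at the end. The trade-off is that your series bound on $\pi'_\alpha$ carries a factor $1/\alpha$ which you must then absorb using $\beta^n\le\tfrac{9}{8}\alpha^m$, whereas the Displacement Lemma gives a uniform bound independent of $\alpha$ straight away; on the other hand, your argument is self-contained and avoids quoting the Displacement Lemma from \cite{Tetenov}. Both routes identify the same dominant contribution $m\alpha^{m-1}(1-\gamma)$ and exploit $\gamma<\tfrac{1}{9}$ in the same essential way.
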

\begin{proof}
	Let $\alpha, \alpha'\in D_{m, n}(\beta, \gamma)$ and $\ii, \jj\in \Sigma$ be arbitrary. We introduce the notation $\mathcal{S}=\mathcal{S}_\ind=\{ S_1, S_2, S_3\}$,  $\mathcal{S}'=\mathcal{S}_{\alpha', \beta, \gamma}=\{ S_1', S_2', S_3'\} $, let $\pi=\pi_\ind$ and $\pi'=\pi_{\alpha', \beta, \gamma}$. Observe $S_2'=S_2$ and $S_3'=S_3$.
	\par
	Let $\alpha<\alpha'$ and $\delta=\abs{\alpha'-\alpha}$, then using Lagrange mean value theorem
	\begin{align}\label{095}
	m\alpha^{m-1}\leq \frac{\alpha'^m-\alpha^m}{\alpha'-\alpha}=\frac{\abs{\alpha'^m-\alpha^m}}{\delta}\leq m\alpha'^{m-1}.
	\end{align}
	
	We defined $\delta=\abs{\alpha'-\alpha}$ and using Lemma~\ref{094} for $\mathcal{S}$ and $\mathcal{S}'$, then we get
	\begin{align}\label{093}
	\text{ for every } \ii\in \Sigma\quad \abs{\pi(\ii)-\pi'(\ii)}\leq \frac{9}{8}\delta.
	\end{align}
	Consider the difference that we have to estimate
	\begin{align*}
	\begin{split}
	\varphi_1(\alpha, \ii)&-\varphi_1(\alpha', \ii)+\varphi_2(\alpha', \jj)-\varphi_2(\alpha, \jj)=\\
	=S_1^mS_3(\pi(\ii))&-S_1'^mS_3'(\pi'(\ii))+S_2'^nS_3'(\pi'(\jj))-S_2^nS_3(\pi(\jj))=\\
	=S_1^mS_3(\pi(\ii))&-S_1'^mS_3(\pi'(\ii))+S_2^nS_3(\pi'(\jj))-S_2^nS_3(\pi(\jj))=\\
	= \underbrace{S_1^mS_3(\pi(\ii))-S_1^mS_3(\pi'(\ii))}_A&+\underbrace{S_1^mS_3(\pi'(\ii))-S_1'^mS_3(\pi'(\ii))}_B+\underbrace{S_2^nS_3(\pi'(\jj))-S_2^nS_3(\pi(\jj))}_C .
	\end{split}
	\end{align*}
	We will use the estimate
	\begin{align}
	\abs{A+B+C}\geq \abs{B}-\abs{A+C}\geq \abs{B}-\abs{A}-\abs{C}.
	\end{align}
	Consider $\abs{A}$ part of the above calculation
	\begin{align*}
	\abs{A}=\abs{S_1^mS_3(\pi(\ii))-S_1^mS_3(\pi'(\ii))}=\alpha^m \gamma\abs{\pi(\ii)-\pi'(\ii)}\leq \frac{9}{8}\alpha^m \gamma \delta,
	\end{align*}
	where in the inequation we use \eqref{093}. The next part is
	\begin{align*}
	\abs{B}=\abs{S_1^mS_3(\pi'(\ii))-S_1'^mS_3(\pi'(\ii))}=\abs{\alpha^m-\alpha'^m}\abs{S_3(\pi'(\ii))}\geq \frac{8}{9} m\alpha^{m-1}\delta,
	\end{align*}
	where in the inequation we use \eqref{095}. Finally,
	\begin{align*}
	\abs{C}=\abs{S_2^nS_3(\pi'(\jj))-S_2^nS_3(\pi(\jj))}=\beta^n\gamma\abs{\pi(\jj)-\pi'(\jj)}\leq \frac{9}{8}\beta^n\gamma\delta,
	\end{align*}
	where we used again \eqref{093}.
	\par
	Now estimate
	\begin{align}
	\abs{B}-\abs{A}\geq \left( \frac{8m}{9\alpha}-\frac{9}{8}\gamma  \right)\alpha^m\delta\geq  \left( 8-\frac{9}{8} \right)\alpha^m\delta\geq \left( 8-\frac{9}{8} \right)\frac{8}{9}\beta^n\delta>6\beta^n \delta,
	\end{align}
	where in the second inequation we use $\gamma<1, \, m\geq 1, \, \alpha<\frac{1}{9}$.
	\par
	The following
	\begin{align}
	\abs{C}\leq \frac{9}{8}\gamma\beta^n \delta< \beta^n\delta
	\end{align}
	is true, beacuse $\gamma<\frac{1}{9}$.
	Thus
	\begin{align}
	\abs{\varphi_1(\alpha, \ii)-\varphi_2(\alpha, \jj)-\varphi_1(\alpha', \ii)+\varphi_2(\alpha', \jj)}\geq 5\beta^n\abs{\alpha'-\alpha},
	\end{align}
	so $M=5\beta^n$.
\end{proof}

\begin{theorem}\label{090}
	Let $\beta, \gamma\in (0, \frac{1}{9})$. Then
	\begin{align}
	\leb\left((0,\beta)\setminus\left\lbrace \alpha\in (0,\beta): \mathcal{S}_\ind \text{ is a forward separated system}   \right\rbrace\right)=0.
	\end{align}
\end{theorem}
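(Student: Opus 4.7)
The plan is to write the complement of the forward-separated parameter set as a countable union
\begin{equation*}
(0,\beta)\setminus\{\alpha\in(0,\beta):\mathcal{S}_\ind\text{ is forward separated}\}=\bigcup_{m,n\geq 1}E_{m,n},
\end{equation*}
where $E_{m,n}:=\{\alpha\in(0,\beta):S_1^m(R_\ind)\cap S_2^n(R_\ind)\neq\emptyset\}$, and to prove $\dim_HE_{m,n}<1$ for every fixed pair $(m,n)\in\mathbb{N}^+\times\mathbb{N}^+$. Any subset of $\R$ of Hausdorff dimension strictly less than $1$ has $\mathcal{H}^1$-measure, hence Lebesgue measure, equal to zero, and a countable union of null sets is null, so this will suffice.

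Fix $m,n\geq 1$. By the lemma immediately preceding Lemma~\ref{091}, every $\alpha\in E_{m,n}$ already lies in the compact interval $D_{m,n}(\beta,\gamma)$, so I would take $D:=D_{m,n}(\beta,\gamma)\cap(0,\beta]$ as the parameter domain, index spaces $L_1=L_2=\Sigma$ equipped with the metric $\rho_a$ for a fixed $a\in(\max\{\beta,\gamma\},\tfrac{1}{9})$, and the continuous maps $\varphi_1,\varphi_2$ defined in Lemma~\ref{091}. Because $\varphi_1(\alpha,\Sigma)=S_1^m(R_\ind)$ and $\varphi_2(\alpha,\Sigma)=S_2^n(R_\ind)$, the exceptional set $\Delta$ produced by the General Position Theorem (Theorem~\ref{097}) is precisely $E_{m,n}\cap D=E_{m,n}$.

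It remains to verify the two hypotheses of Theorem~\ref{097}. The H\"older hypothesis holds with exponent $1$: the affine map $S_1^mS_3$ has Lipschitz constant $\alpha^m\gamma<1$, and $\pi_\ind$ is $1$-Lipschitz with respect to $\rho_a$ by Lemma~\ref{092} (which applies since $\alpha<\beta<a$ and $\gamma<a$), so
\begin{equation*}
|\varphi_1(\alpha,\ii)-\varphi_1(\alpha,\jj)|=\alpha^m\gamma\,|\pi_\ind(\ii)-\pi_\ind(\jj)|\leq\rho_a(\ii,\jj),
\end{equation*}
and analogously for $\varphi_2$, so one may take $C=1$. The transversality of $\Phi=\varphi_1-\varphi_2$ in the parameter is exactly the content of Lemma~\ref{091}, with the explicit constant $M=5\beta^n>0$. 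Since the symbolic product $\Sigma\times\Sigma$ with the natural product metric has Hausdorff dimension $\tfrac{2\log 3}{-\log a}<1$ by the choice $a<\tfrac{1}{9}$ (cf.~\eqref{096}), Theorem~\ref{097} yields $\dim_HE_{m,n}\leq\tfrac{2\log 3}{-\log a}<1$. Taking the countable union over $m,n\geq 1$ then finishes the argument. The only substantive ingredient is the transversality estimate, but that has already been isolated as Lemma~\ref{091}; the rest is essentially bookkeeping to fit into the framework of Theorem~\ref{097}.
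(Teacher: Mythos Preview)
Your proof is correct and follows the same route as the paper: decompose the exceptional set as a countable union over $(m,n)$, then apply the General Position Theorem (Theorem~\ref{097}) with the H\"older estimate from Lemma~\ref{092} and the transversality from Lemma~\ref{091} to get $\dim_H E_{m,n}<1$. The only difference is cosmetic: the paper introduces an auxiliary $\varepsilon$-truncation $[\varepsilon,\tfrac{1}{9}-\varepsilon]$ of the parameter interval (and then a countable union over $\varepsilon=1/k$) to ensure compactness of $D$, whereas you notice that $D_{m,n}(\beta,\gamma)\cap(0,\beta]$ is already a compact interval, which slightly streamlines the bookkeeping.
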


\begin{proof}
	Let $m, n \in \mathbb{N}^+$ and $\beta,\gamma\in(0,1/9)$ be arbitrary but fixed. First, we show that for the set
	\begin{align}
	\Delta_{m , n }(\beta, \gamma)=\left\lbrace \alpha\in \left(0, \frac{1}{9}\right): S_1^m(R_\ind)\cap S_2^n(R_\ind )    \neq \emptyset \right\rbrace
	\end{align}
	 $\mathcal{L}(\Delta_{m, n}(\beta, \gamma))=0$.
	
	Let $\varepsilon>0$ be such that $\frac{1}{9}-\varepsilon>\beta, \gamma$. Then $E_{m, n}(\beta, \gamma)=D_{m, n}(\beta, \gamma)\cap [\varepsilon, \frac{1}{9}-\varepsilon]$ is a closed interval in $\R$, so it is compact. We consider the compact metric space $(\Sigma, \rho_{a})$, where $\Sigma=\{  1, 2, 3 \}^{\mathbb{N}^+}$ and $a=\frac{1}{9}-\frac{\varepsilon}{2}$.
	\par
	Let $\varphi_i: E_{m, n}(\beta, \gamma)\times \Sigma \rightarrow \R\, $ for $i=1, 2$ as in Lemma~\ref{091}. Let
	\begin{align*}
	\Xi_{m, n}^\varepsilon(\beta, \gamma)=\Delta_{m , n }(\beta, \gamma)\cap \left[ \varepsilon, \frac{1}{9}-\varepsilon\right].
	\end{align*}
	For an $\alpha$ the $S_1^m(R_\ind)\cap S_2^n(R_\ind)\neq \emptyset$ holds if and only if there exist $ \ii, \jj\in \Sigma$ such that $\varphi_1(\alpha, \ii)=\varphi_2(\alpha, \jj)$, thus
	\begin{align}
	\Xi_{m, n}^\varepsilon(\beta, \gamma)=\{ \alpha\in E_{m, n}(\beta, \gamma):\varphi_1(\alpha, \Sigma)\cap \varphi_2(\alpha, \Sigma)\neq \emptyset    \}.
	\end{align}
	Using Lemma \ref{092} one can see that $\varphi_i$ is H\"older continuous with respect to the second variable for $i=1, 2$. Applying Lemma \ref{091}, we get that the conditions of the General Position Theorem \ref{097} holds. Using General Position Theorem \ref{097}, then get
	\begin{align}
	\hdim(\Xi_{m, n}^\varepsilon(\beta, \gamma))\leq \hdim(\Sigma\times \Sigma)\leq 2\hdim(\Sigma)<1,
	\end{align}
	the last inequality holds because of \eqref{096}. So $\leb(\Xi_{m,n}^\varepsilon(\beta, \gamma))=0$. Moreover,
	\begin{align}
	\Delta_{m, n }(\beta, \gamma)=\bigcup_{k=1}^\infty \Xi_{m, n }^{1/k}(\beta, \gamma),
	\end{align}
	thus the continuity of measure yields that $\leb(\Delta_{m, n }(\beta, \gamma))=0$.

	Finally,
	\begin{align}
	(0,\beta)\setminus\left\lbrace \alpha\in (0,\beta): \mathcal{S}_\ind \text{ is a forward separated system}   \right\rbrace=\bigcup_{m, n=1}^\infty \Delta_{m, n}(\beta,\gamma),
\end{align}
and thus, the statement follows.
\end{proof}

Finally, Theorem~\ref{thm:main} follows by Theorem~\ref{thm:dimforsep} and Theorem~\ref{090}.

\bibliographystyle{plain}
\bibliography{biblio}

\begin{thebibliography}{10}

\bibitem{Barany}
Bal\'{a}zs B\'{a}r\'{a}ny.
\newblock Dimension of the generalized 4-corner set and its projections.
\newblock {\em Ergodic Theory Dynam. Systems}, 32(4):1190--1215, 2012.

\bibitem{Faclonerbook}
Kenneth Falconer.
\newblock {\em Techniques in fractal geometry}.
\newblock John Wiley \& Sons, Ltd., Chichester, 1997.

\bibitem{Falconer2}
Kenneth Falconer.
\newblock {\em Fractal geometry}.
\newblock John Wiley \& Sons, Inc., Hoboken, NJ, second edition, 2003.
\newblock Mathematical foundations and applications.

\bibitem{FaFr}
\'{A}bel Farkas and Jonathan~M. Fraser.
\newblock On the equality of {H}ausdorff measure and {H}ausdorff content.
\newblock {\em J. Fractal Geom.}, 2(4):403--429, 2015.

\bibitem{feng2009dimension}
De-Jun Feng and Huyi Hu.
\newblock Dimension theory of iterated function systems.
\newblock {\em Communications on Pure and Applied Mathematics},
  62(11):1435--1500, 2009.

\bibitem{Fraser}
Jonathan~M. Fraser.
\newblock Assouad type dimensions and homogeneity of fractals.
\newblock {\em Trans. Amer. Math. Soc.}, 366(12):6687--6733, 2014.

\bibitem{hochman2012self}
Michael Hochman.
\newblock On self-similar sets with overlaps and inverse theorems for entropy.
\newblock {\em Ann. of Math. (2)}, 180(2):773--822, 2014.

\bibitem{Hutchinson}
John~E. Hutchinson.
\newblock Fractals and self-similarity.
\newblock {\em Indiana Univ. Math. J.}, 30:713--747, 1981.

\bibitem{Tetenov}
Kirill Kamalutdinov and Andrey Tetenov.
\newblock Twofold {C}antor sets in $\mathbb{R}$.
\newblock {\em Siberian Electronic Mathematical Reports}, 15:801--814, 2018.

\bibitem{LauNgai}
Ka-Sing Lau and Sze-Man Ngai.
\newblock Multifractal measures and a weak separation condition.
\newblock {\em Adv. Math.}, 141(1):45--96, 1999.

\bibitem{MihUrb}
Eugen Mihailescu and Mariusz Urba\'{n}ski.
\newblock Random countable iterated function systems with overlaps and
  applications.
\newblock {\em Adv. Math.}, 298:726--758, 2016.

\bibitem{PoSi}
Mark Pollicott and K\'{a}roly Simon.
\newblock The {H}ausdorff dimension of {$\lambda$}-expansions with deleted
  digits.
\newblock {\em Trans. Amer. Math. Soc.}, 347(3):967--983, 1995.

\bibitem{rokhlin}
V.~A. Rohlin.
\newblock On the fundamental ideas of measure theory.
\newblock {\em Amer. Math. Soc. Translation}, 1952(71):55, 1952.

\bibitem{simmons}
David Simmons.
\newblock Conditional measures and conditional expectation; {R}ohlin's
  disintegration theorem.
\newblock {\em Discrete Contin. Dyn. Syst.}, 32(7):2565--2582, 2012.

\bibitem{SiSo}
K\'aroly Simon and Boris Solomyak.
\newblock On the dimension of self-similar sets.
\newblock {\em Fractals}, 10(1):59--65, 2002.

\bibitem{SSU2}
K\'aroly Simon, Boris Solomyak, and Mariusz Urba\'nski.
\newblock Invariant measures for parabolic ifs with overlaps and random
  continued fractions.
\newblock {\em Trans. Amer. Math. Soc.}, 353:5145--5164, 2001.

\bibitem{Zerner}
Martin P.~W. Zerner.
\newblock Weak separation properties for self-similar sets.
\newblock {\em Proc. Amer. Math. Soc.}, 124(11):3529--3539, 1996.

\end{thebibliography}

\end{document}